\def\NZQ{\Bbb}               
\def\NN{{\NZQ N}}
\def\QQ{{\NZQ Q}}
\def\ZZ{{\NZQ Z}}
\def\PP{{\NZQ P}}
\def\frk{\frak}               
\def\Phi{{\frk n}}
\def\Phi{{\frk N}}
\def\MC{{\mathcal C}}
\def\MP{{\mathcal P}}
\def\PP{{\mathcal P}}
\def\MQ{{\mathcal Q}}
\def\ME{{\mathcal E}}
\def\MM{{\mathcal M}}
\def\MO{{\mathcal O}}
\def\MH{{\mathcal H}}
\def\opn#1#2{\def#1{\operatorname{#2}}} 
\opn\chara{char} \opn\length{\ell} \opn\pd{pd} \opn\rk{rk}
\opn\projdim{proj\,dim} \opn\injdim{inj\,dim} \opn\rank{rank}
\opn\depth{depth} \opn\grade{grade} \opn\height{height}
\opn\embdim{emb\,dim} \opn\codim{codim}
\opn\Tr{Tr} \opn\bigrank{big\,rank}
\opn\superheight{superheight}\opn\lcm{lcm}
\opn\trdeg{tr\,deg}
\opn\reg{reg} \opn\lreg{lreg} \opn\ini{in} \opn\lpd{lpd}
\opn\size{size}\opn\bigsize{bigsize}
\opn\cosize{cosize}\opn\bigcosize{bigcosize}
\opn\sdepth{sdepth}\opn\sreg{sreg}
\opn\link{link}\opn\fdepth{fdepth}
\opn\div{div} \opn\Div{Div} \opn\cl{cl} \opn\Cl{Cl}
\opn\Spec{Spec} \opn\Supp{Supp} \opn\supp{supp} \opn\Sing{Sing}
\opn\Ass{Ass} \opn\Min{Min}\opn\Mon{Mon} \opn\dstab{dstab} \opn\astab{astab}
\opn\Ann{Ann} \opn\Rad{Rad} \opn\Soc{Soc}
\opn\Im{Im} \opn\Ker{Ker} \opn\Coker{Coker} \opn\Am{Am}
\opn\Hom{Hom} \opn\Tor{Tor} \opn\Ext{Ext} \opn\End{End}
\opn\Aut{Aut} \opn\id{id}
\opn\nat{nat}
\opn\pff{pf}
\opn\Pf{Pf} \opn\GL{GL} \opn\SL{SL} \opn\mod{mod} \opn\ord{ord}
\opn\Gin{Gin} \opn\Hilb{Hilb}\opn\sort{sort}
\opn\aff{aff} \opn\con{conv} \opn\relint{relint} \opn\st{st}
\opn\lk{lk} \opn\cn{cn} \opn\core{core} \opn\vol{vol}
\opn\link{link} \opn\star{star}\opn\lex{lex}
\opn\grevlex{grevlex}
\opn\cdeg{cdeg}
\opn\T{T}
\opn\gr{gr}
\def\pot#1#2{#1[\kern-0.28ex[#2]\kern-0.28ex]}
\opn\dirlim{\underrightarrow{\lim}}
\opn\inivlim{\underleftarrow{\lim}}
\def\Implies{\ifmmode\Longrightarrow \else
        \unskip${}\Longrightarrow{}$\ignorespaces\fi}
\def\implies{\ifmmode\Rightarrow \else
        \unskip${}\Rightarrow{}$\ignorespaces\fi}
\def\iff{\ifmmode\Longleftrightarrow \else
        \unskip${}\Longleftrightarrow{}$\ignorespaces\fi}
\theoremstyle{plain}
\newtheorem{theorem}{Theorem}[section]
\newtheorem{lemma}[theorem]{Lemma}
\newtheorem{proposition}[theorem]{Proposition}
\newtheorem{corollary}[theorem]{Corollary}
\newtheorem*{theorem-q}{Theorem}
\newtheorem*{corollary-q}{Corollary}
\newtheorem*{question-q}{Question}
\newtheorem*{questions-q}{Questions}
\theoremstyle{definition}
\newtheorem{definition}[theorem]{Definition}
\theoremstyle{remark}
\newcommand{\resi}{0.1}
\newcommand{\resiII}{0.11}
\newcommand{\one}{ 
 \draw (1,0)--(1,2);
     \draw (0,1)--(2,1);
    \draw (0,0)--(2,0);
    \draw (2,0)--(2,2);  
  
  \draw (0,0)--(0,1);
    \draw (1,2)--(2,2);
    \draw[dashed] (0,1)--(0,2);
    \draw[dashed] (0,2)--(1,2);
    
    \node at (1,-0.2){};
    \node at (1,2.2){}; }
    \newcommand{\two}{     
\draw (1,0)--(1,2);
     \draw (0,1)--(2,1);
    \draw (0,0)--(0,2);  
    \draw (0,2)--(2,2);  
  \draw (0,0)--(1,0);
    \draw (2,1)--(2,2);
    \draw[dashed] (1,0)--(2,0);
    \draw[dashed] (2,0)--(2,1);   
      \node at (1,-0.2){};
    \node at (1,2.2){};
    }
\newcommand{\three}{  \draw (1,0)--(1,2);
     \draw (0,1)--(2,1);
    \draw (0,0)--(0,2); 
    \draw (0,0)--(2,0);
  \draw (0,2)--(1,2);
    \draw (2,1)--(2,0);
    \draw[dashed] (1,2)--(2,2);
    \draw[dashed] (2,1)--(2,2);
      \node at (1,-0.2){};
    \node at (1,2.2){};
    }
    \newcommand{\four}{
         \draw (1,0)--(1,2);
     \draw (0,1)--(2,1);
    \draw (2,0)--(2,2);  
\draw (0,2)--(2,2);
  \draw (1,0)--(2,0);
    \draw (0,1)--(0,2);
    \draw[dashed] (0,0)--(1,0);
    \draw[dashed] (0,1)--(0,0);
    
      \node at (1,-0.2){};
    \node at (1,2.2){};
    }
    \newcommand{\oneI}{ \resizebox{\resi\textwidth}{!}{
\begin{tikzpicture}
  \two
 \filldraw (0,1) circle (2pt);
 \node at (0,1.2) [anchor=west]{$v$};
\end{tikzpicture} }}
 \newcommand{\oneII}{ \resizebox{\resi\textwidth}{!}{
\begin{tikzpicture}
	\four
 \filldraw (1,1) circle (2pt);
 \node at (1,1.2) [anchor=west]{$v$};
\end{tikzpicture} }}
    \newcommand{\twoI}{ \resizebox{\resi\textwidth}{!}{
\begin{tikzpicture}
\four
 \filldraw (2,1) circle (2pt);
 \node at (2,1.2) [anchor=east]{$v$};
\end{tikzpicture} }}
 \newcommand{\twoII}{ \resizebox{\resi\textwidth}{!}{
\begin{tikzpicture}
\two
 \filldraw (1,1) circle (2pt);
 \node at (1,1.2) [anchor=west]{$v$};
\end{tikzpicture} }}
    \newcommand{\threeI}{ \resizebox{\resi\textwidth}{!}{
\begin{tikzpicture}
\one
 \filldraw (2,1) circle (2pt);
 \node at (2,1.2) [anchor=east]{$v$};
\end{tikzpicture} }}
 \newcommand{\threeII}{\resizebox{\resi\textwidth}{!}{
\begin{tikzpicture}
\three
 \filldraw (1,1) circle (2pt);
 \node at (1,1.2) [anchor=west]{$v$};
\end{tikzpicture} }}
    \newcommand{\fourI}{ \resizebox{\resi\textwidth}{!}{
\begin{tikzpicture}
	\three
 \filldraw (0,1) circle (2pt);
 \node at (0,1.2) [anchor=west]{$v$};
\end{tikzpicture} }}
 \newcommand{\fourII}{\resizebox{\resi\textwidth}{!}{
\begin{tikzpicture}
\one
 \filldraw (1,1) circle (2pt);
 \node at (1,1.2) [anchor=west]{$v$};
\end{tikzpicture} }}
    \newcommand{\fiveI}{ \resizebox{\resi\textwidth}{!}{
\begin{tikzpicture}
  \one
 \filldraw (1,0) circle (2pt);
 \node at (1,0.2) [anchor=west]{$v$};
\end{tikzpicture} }}
 \newcommand{\fiveII}{ \resizebox{\resi\textwidth}{!}{
\begin{tikzpicture}
	\four
 \filldraw (1,1) circle (2pt);
 \node at (1,1.2) [anchor=west]{$v$};
\end{tikzpicture} }}
 \newcommand{\sixI}{ \resizebox{\resi\textwidth}{!}{
\begin{tikzpicture}
\three
 \filldraw (1,0) circle (2pt);
 \node at (1,0.2) [anchor=west]{$v$};
\end{tikzpicture} }}
 \newcommand{\sixII}{ \resizebox{\resi\textwidth}{!}{
\begin{tikzpicture}
\two
 \filldraw (1,1) circle (2pt);
 \node at (1,1.2) [anchor=west]{$v$};
\end{tikzpicture} }}
    \newcommand{\sevenI}{ \resizebox{\resi\textwidth}{!}{
\begin{tikzpicture}
\two
 \filldraw (1,2) circle (2pt);
 \node at (1,1.8) [anchor=west]{$v$};
\end{tikzpicture} }}
 \newcommand{\sevenII}{\resizebox{\resi\textwidth}{!}{
\begin{tikzpicture}
\three
 \filldraw (1,1) circle (2pt);
 \node at (1,1.2) [anchor=west]{$v$};
\end{tikzpicture} }}
    \newcommand{\eightI}{ \resizebox{\resi\textwidth}{!}{
\begin{tikzpicture}
\four
 \filldraw (1,2) circle (2pt);
 \node at (1,1.8) [anchor=west]{$v$};
\end{tikzpicture} }}
 \newcommand{\eightII}{\resizebox{\resi\textwidth}{!}{
\begin{tikzpicture}
\one
 \filldraw (1,1) circle (2pt);
 \node at (1,1.2) [anchor=west]{$v$};
\end{tikzpicture} }}
    \newcommand{\parI}{ \resizebox{\resi\textwidth}{!}{
\begin{tikzpicture}
 \node at (0,0) {{\scriptsize(I)}};
  \node at (0,-0.38){};
\end{tikzpicture} }}
 \newcommand{\parII}{\resizebox{\resiII\textwidth}{!}{
\begin{tikzpicture}
 \node at (0,0) {{\scriptsize(II)}};
 \node at (0,-0.38){};
\end{tikzpicture} }}
\renewcommand{\ME}{\mathcal{E}}
\renewcommand{\QQ}{\mathcal{Q}}
\renewcommand{\phi}{\varphi}
\title{Primality of polyomino ideals by quadratic Gr\"obner basis}
\author[Carla Mascia]{Carla Mascia}
\address[Carla Mascia]{Department of Mathematics\\
University of Trento\\
via Sommarive, 14\\
38123 Povo (Trento), Italy}
\email{carla.mascia@unitn.it}
\author[Giancarlo Rinaldo]{Giancarlo Rinaldo}
\address[Giancarlo Rinaldo]{Department of Mathematics\\
University of Trento\\
via Sommarive, 14\\
38123 Povo (Trento), Italy}
\email{giancarlo.rinaldo@unitn.it}
\author[Francesco Romeo]{Francesco Romeo}
\address[Francesco Romeo]{Department of Mathematics\\
University of Trento\\
via Sommarive, 14\\
38123 Povo (Trento), Italy}
\email{francesco.romeo-3@unitn.it}
\begin{document}

\begin{abstract}
In this work, we provide a necessary and sufficient condition on a polyomino ideal for having the set of inner $2$-minors as graded reverse lexicographic Gr\"obner basis, due to combinatorial properties of the polyomino itself. Moreover, we prove that when the latter holds the ideal  coincides with the lattice ideal associated to the polyomino, that is the ideal is prime. As an application, we describe two new infinite families of prime polyominoes.
\end{abstract}

\maketitle

\section*{Introduction}
The ideals generated by a subset of $t$-minors of an $m \times n$ matrix of indeterminates are an intensively-studied class of binomial ideals, due to their applications in algebraic statistics. Among these ideals, one finds the determinantal ideals, see, for instance, \cite{BV} and its references to original articles, the ladder ideals introduced by Conca in \cite{Co}, and the ideals of adjacent minors introduced by Ho\c{s}ten and Sullivan in \cite{HS}. In 2012, a new class of ideals generated by 2-minors were defined by Qureshi in \cite{Qu}: the \textit{polyomino ideals}. They arise from two-dimensional objects obtained by joining edge by edge unitary squares, called polyominoes. Over the last few years, algebraic properties of polyomino ideals have been investigated, mainly exploiting the combinatorics of the underlying polyomino. One of the most challenging, and still unsolved, algebraic problems on polyominoes is the classification of the prime ones. The fact that a binomial ideal is a prime ideal if and only if it is a toric ideal explains the great interest in prime polyomino ideals. Several steps in this direction have been done, but giving a complete characterization of the prime polyomino ideals does not seem to be an easy task. In  \cite{HM} and \cite{QSS}, the authors prove that simple polyominoes, namely  without holes, are prime. In \cite{HQ} and \cite{Sh1}, a family of prime polyominoes obtained by removing a convex polyomino by a given rectangle was showed. In a more recent paper \cite{MRR}, it is demonstrated that if the polyomino $\MP$ is prime, then it should have no zig-zag walks, and it is conjectured that this is also a sufficient condition for the primality of $\MP$. This conjecture has been verified computationally for all the polyominoes of rank $\leq $ 14. Moreover, in the same work, the authors present a new infinite class of prime: the grid polyominoes. 

Beside the primality, another interesting question concerns the Gr\"obner basis of ideals generated by a subset of $t$-minors, see \cite{Na}, \cite{St1} and \cite{CGG}. As regards polyomino ideals, in \cite{Qu}, the author provides a necessary and sufficient condition for the set of inner 2-minors to be a reduced Gröbner basis of $I_{\MP}$ with respect to two fixed lexicographic monomial orders. Whereas, in \cite{HQS}, Herzog, Qureshi and Shikama show that the ideal of a balanced polyomino has a quadratic Gr\"obner basis with respect to any monomial ideal, that is the ideal is radical. 

In this work, we combine the two above-mentioned questions: we study the primality of the polyomino ideals, by computing their Gr\"obner basis with respect to particular graded reverse lexicographic monomial orders. 
In Section \ref{sec:pre}, we provide the basic definitions regarding polyominoes and their ideals of inner 2-minors. Moreover, we recall the definition given in \cite{Qu} of the lattice ideal associated to a polyomino $\MP$, and we show that it is the ideal quotient of the polyomino ideal $I_{\MP}$ and a monomial. In Section \ref{sec:gb and primality}, we define different graded reverse lexicographic monomial orders and, as in \cite{Qu}, we give a necessary and sufficient condition on $\MP$ for having the set of inner 2-minors as reduced Gr\"obner basis of $I_{\MP}$ (see Proposition \ref{prop:quadraticGB}). Starting from these monomial orders, for any corner $v$ of the polyomino, we define new monomial orders $<_v$ such that the variable $x_v$ is the smallest one with respect to $<_v$. We determine when $I_{\MP}$ admits quadratic Gr\"obner basis with respect to $<_v$ (see Proposition \ref{prop:pik}). In this case, we prove that the ideal is prime (see Theorem \ref{Theo: grid prime}). In the final section of this paper, we apply all the previous results on a class of polyominoes: the \textit{thin polyominoes} (see Definition \ref{def: thin}). We exhibit necessary and sufficient conditions in terms of the geometry of the thin polyomino so that its ideal has a quadratic Gr\"obner basis with respect to some graded reverse lexicographic monomial orders (see Theorem \ref{theo: nec and suff for thin}). As an application we find two subclasses of thin polyominoes that are prime (see Corollary \ref{cor: thin cycle prime} and \ref{cor: gridDeletion}): one is that of \textit{thin cycles} (see Definition \ref{def:thincycle}) with inner intervals of length at least 3, and the other consists of polyominoes obtained from grid polyominoes by the deletion of some cells, that we call \textit{subgrid polyominoes} (see Definition \ref{def: subgrid}). 

\section{Preliminaries and lattice ideals}\label{sec:pre}

Let $a = (i, j), b = (k, \ell) \in \NN^2$, with $i	\leq k$ and $j\leq\ell$, the set $[a, b]=\{(r,s) \in \NN^2 : i\leq r \leq k \text{ and } j \leq s \leq \ell\}$ is called an \textit{interval} of $\NN^2$. If $i<k$ and $j < \ell$, $[a,b]$ is called a \textit{proper interval}, and the elements $a,b,c,d$ are called corners of $[a,b]$, where $c=(i,\ell)$ and $d=(k,j)$. In particular, $a,b$ are called \textit{diagonal corners} and $c,d$ \textit{anti-diagonal corners} of $[a,b]$. The corner $a$ (resp. $c$) is also called the left lower (resp. upper) corner of $[a,b]$, and $d$ (resp. $b$) is the right lower (resp. upper) corner of $[a,b]$. 
A proper interval of the form $C = [a, a + (1, 1)]$ is called a \textit{cell}. Its vertices $V(C)$ are $a, a+(1,0), a+(0,1), a+(1,1)$. The sets $ \{a,a+(1,0)\}, \{a,a+(0,1)\},\{a+(1,0),a+(1,1)\},$ and $\{a+(0,1),a+(1,1)\}$ are called the \textit{edges} of C.
Let $\MP$ be a finite collection of cells of $\NN^2$, and let $C$ and $D$ be two cells of $\MP$. Then $C$ and $D$ are said to be \textit{connected} if there is a sequence of cells $C = C_1,\ldots, C_m = D$ of $\MP$ such that $C_i\cap C_{i+1}$ is an edge of $C_i$
for $i = 1,\ldots, m - 1$. In addition, if $C_i \neq C_j$ for all $i \neq j$, then $C_1,\dots, C_m$ is called a \textit{path} (connecting $C$ and $D$). A collection of cells $\MP$ is called a \textit{polyomino} if any two cells of $\MP$ are connected. We denote by $V(\MP)=\cup _{C\in \MP} V(C)$ the vertex set of $\MP$. 

A polyomino $\mathcal{Q}$ is said to be a \textit{subpolyomino} of a polyomino $\MP$ if each cell belonging to $\MQ$ belongs to $\MP$, and we write $\MQ\subset \MP$.
A proper interval $[a,b]$ is called an \textit{inner interval} of $\MP$ if all cells of $[a,b]$ belong to $\MP$.
We say that a polyomino $\MP$ is \textit{simple} if for any two cells $C$ and $D$ of $\NN^2$ not belonging to $\MP$, there exists a path $C=C_{1},\dots,C_{m}=D$ such that $C_i \notin \MP$ for any $i=1,\dots,m$. 

A finite collection $\MH$ of cells not in $\MP$ is called a \emph{hole} of $\MP$ if any two cells  in $\MH$ are connected through a path of cells in $\MH$, and $\MH$ is maximal with respect to the inclusion. Note that a hole $\MH$ of a polyomino $\MP$ is itself a simple polyomino.

Let $\MP$ be a polyomino. Let $\mathbb{K}$ be a field and $S = \mathbb{K}[x_v \ | \ v \in V(\MP)]$. The binomial $x_a x_b - x_c x_d\in S$ is called an \textit{inner 2-minor} of $\MP$ if $[a,b]$ is an inner interval of $\MP$, where $c,d$ are the anti-diagonal corners of $[a,b]$. We denote by $\MM$ the set of all inner 2-minors of $\MP$. The ideal $I_\MP\subset S$ generated by $\MM$ is called the \textit{polyomino ideal} of $\MP$.

We recall that given a lattice $\Lambda \subseteq \mathbb{Z}^{m\times n}$,  we attach a binomial ideal $I_\Lambda$ called the \emph{lattice ideal} of $\Lambda$ such that
\[
x^\mathbf{a}-x^\mathbf{b} \in I_\Lambda \Leftrightarrow \mathbf{a}-\mathbf{b} \in \Lambda.
\]
We say that a lattice $\Lambda$ is \emph{saturated} if for any $\mathbf{a} \in \mathbb{Z}^{m\times n}$, $c \in \mathbb{Z}$ such that $c\mathbf{a}  \in \Lambda$, we have $\mathbf{a}  \in \Lambda$. It is known that $\Lambda$ is saturated if and only if $I_\Lambda$ is prime. Let $\MP \subseteq [(1,1),(m,n)]$ be a polyomino. Let 
\[
\mathcal{B}=\{\mathbf{e} _{ij} \ : i\in\{1,\ldots,m\}, \ j\in\{1,\ldots, n\}\} 
\] 
be the canonical basis of $\mathbb{Z}^{m\times n}$ and let $\MC=\{C_1,\ldots, C_r\}$ be the set of cells of $\MP$. Let 
$\alpha:\MC\longrightarrow\mathbb{Z}^{m\times n}$ be such that $\alpha(C_k)=\mathbf{c}_k=\mathbf{e}_{ij}+\mathbf{e}_{i+1j+1}-\mathbf{e}_{i+1j}-\mathbf{e}_{ij+1}$, where $(i,j)$ is the lower left corner of the cell $C_k$.

It is known from \cite{ES} that an ideal generated by any set of adjacent 2-minors of a $m \times n$ matrix is a lattice ideal and that its corresponding lattice is saturated. Hence, the lattice $\Lambda=\langle \{\mathbf{c}_k\}_{k=1,\ldots,r} \rangle$ is a saturated lattice, and $I_\Lambda$ is a prime ideal. In addition, it is known from \cite{Qu} that for a collection $\MP$ of cells of $\NN^2$, $I_\MP$ is prime if and only if $I_\MP=I_\Lambda$.
\noindent Moreover,
\begin{lemma}\label{Lemma: lattice}
Let $\MP$ be a collection of cells of $\NN^2$, let $S$ be the polynomial ring associated to $\MP$. Then, there exists a monomial $u \in S$ such that
\[
I_\Lambda=(I_\MP : u).
\]
\end{lemma}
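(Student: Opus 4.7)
The plan is to take $u$ to be a sufficiently large power of the product of all variables in $S$, and then to combine two ingredients: the inclusion $I_\MP \subseteq I_\Lambda$, together with the primality of $I_\Lambda$ noted in the preceding paragraph, and the standard saturation formula for lattice ideals (cf.\ \cite{ES}). Concretely, let $J$ denote the binomial ideal generated by the cell binomials
\[
x_{ij}x_{i+1,j+1} - x_{i+1,j}x_{i,j+1}, \qquad C_k = [(i,j),(i+1,j+1)] \in \MC,
\]
whose exponent vectors are precisely the generators $\mathbf{c}_k$ of $\Lambda$. The key external fact I would use is that $I_\Lambda = (J : w^\infty)$, where $w = \prod_{v \in V(\MP)} x_v$.

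First I would establish $J \subseteq I_\MP \subseteq I_\Lambda$ via a telescoping identity. Each cell of $\MP$ is itself an inner interval, so every generator of $J$ is an inner $2$-minor of $\MP$, giving $J \subseteq I_\MP$. For $I_\MP \subseteq I_\Lambda$, consider an inner interval $[a,b]$ with anti-diagonal corners $c,d$: summing $\mathbf{c}_k$ over the cells of $[a,b]$ yields $\mathbf{e}_a + \mathbf{e}_b - \mathbf{e}_c - \mathbf{e}_d$, because every lattice point in the interior or on the interior edges of $[a,b]$ appears with total coefficient zero. Hence $x_a x_b - x_c x_d$ is a binomial arising from an element of $\Lambda$, i.e.\ it belongs to $I_\Lambda$.

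Then I would invoke the saturation formula. Since $S$ is Noetherian, the chain $(J : w^N)_{N \geq 1}$ stabilizes, so there exists $N$ with $(J : w^N) = (J : w^\infty) = I_\Lambda$; set $u := w^N$. From $J \subseteq I_\MP$ we obtain $I_\Lambda = (J : u) \subseteq (I_\MP : u)$. For the reverse inclusion, note that $I_\Lambda$ is prime and contains no variable $x_v$, since the relation $\mathbf{e}_v \in \Lambda$ would violate the zero-coordinate-sum condition on $\Lambda$; by primality, $I_\Lambda$ contains no monomial at all, so $u \notin I_\Lambda$. Therefore $fu \in I_\MP \subseteq I_\Lambda$ forces $f \in I_\Lambda$, yielding $(I_\MP : u) \subseteq I_\Lambda$. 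The only substantive input is the Eisenbud--Sturmfels saturation formula, which is already cited in the paper; the telescoping identity and the colon-ideal manipulations are routine, so there is no serious obstacle beyond bookkeeping.
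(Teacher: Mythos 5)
Your proof is correct, but it takes a genuinely different route from the paper's. The paper proves the inclusion $I_\Lambda\subseteq(I_\MP:u)$ by hand: for each binomial generator $f_{\mathbf e}=x^{\mathbf e^+}-x^{\mathbf e^-}$ of $I_\Lambda$, with $\mathbf e=\sum_k\lambda_k\mathbf c_k$, it defines the specific monomial $x^{\mathbf v}$ with $\mathbf v=\sum_k(\lambda_k\mathbf c_k)^+-\mathbf e^+$, verifies that $\mathbf v$ has non-negative entries, and then shows by a telescoping expansion of $\prod_kx^{(\lambda_k\mathbf c_k)^+}-\prod_kx^{(\lambda_k\mathbf c_k)^-}$ that $x^{\mathbf v}f_{\mathbf e}\in I_\MP$; finally $u$ is taken to be the lcm of these $x^{\mathbf v}$ over a finite generating set. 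You instead quote the saturation formula $I_\Lambda=(J:w^\infty)$ for the ideal $J$ generated by the cell binomials (exponent vectors the $\mathbf c_k$), use Noetherianity to replace $w^\infty$ by $w^N$ and set $u=w^N$, and then deduce $I_\Lambda=(J:u)\subseteq(I_\MP:u)$ from $J\subseteq I_\MP$ by colon-ideal monotonicity; the other inclusion is the same primality argument the paper uses. Your route outsources the substantive direction to the cited saturation theorem, which is shorter but less self-contained; the paper's construction is more explicit and in effect reproves the saturation formula for this lattice. One small gap worth tightening in your write-up: the step ``$x_v\in I_\Lambda$ would force $\mathbf e_v\in\Lambda$'' is not quite a valid implication, since not every element of $I_\Lambda$ is a binomial; the clean way to see that $I_\Lambda$ contains no monomial is to observe that every generator vanishes under the evaluation $x_v\mapsto 1$, so $I_\Lambda$ lies in the kernel of that evaluation while no monomial does. (The paper's own $\supseteq$ step also silently assumes $u\notin I_\Lambda$, so this is not a defect relative to the source, just something to make explicit.)
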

\begin{proof}
$\supseteq)$. Let $u \in S$ be a monomial and let $f \in (I_\MP:u)$. We have that $uf \in I_\MP \subseteq I_\Lambda$. Since $I_\Lambda$ is a prime ideal and $u \notin I_\Lambda$, then $f \in I_\Lambda$.\\
 $\subseteq)$. Let $f_\mathbf{e} = x^{\mathbf{e}^+} - x^{\mathbf{e}^-}$ be a generator of $I_\Lambda$, with
\[
\mathbf{e}=\mathbf{e}^+-\mathbf{e}^-=\sum\limits_{k=1}^r \lambda_k \mathbf{c}_k=\sum\limits_{k=1}^r \left((\lambda_k \mathbf{c}_k)^+ -(\lambda_k \mathbf{c}_k)^-\right) \in \Lambda,
\]
where $\lambda_k \in \ZZ$, $\mathbf{v}^+$ denotes the vector obtained from $\mathbf{v} \in \ZZ^{m \times n}$ by replacing all negative components of $\mathbf{v}$ by zero, and $\mathbf{v}^- = -(\mathbf{v} - \mathbf{v}^+)$. 

Let $\mathbf{v}=\sum\limits_{k=1}^r (\lambda_k \mathbf{c}_k)^+-\mathbf{e}^+=\sum\limits_{k=1}^r (\lambda_k \mathbf{c}_k)^--\mathbf{e}^- $. We have that all the components of $\mathbf{v}$ are non-negative, as for any $k\in \{1,\ldots , r\}$ one has $(\mathbf{c}_{k}^+)_{ij} \geq (\mathbf{c}_k)_{ij}$, for all $1\leq i \leq m$ and $1\leq j \leq n$.
This implies that the monomial $x^\mathbf{v} \in S$ is such that 
\[
x^\mathbf{v}(x^{\mathbf{e}^+}-x^{\mathbf{e}^-})=\prod_{k=1}^r x^{(\lambda_k \mathbf{c}_k)^+} - \prod_{k=1}^r x^{(\lambda_k \mathbf{c}_k)^-}=\sum\limits_{k=1}^r \mu_k  (x^{\mathbf{c}_k^+} -x^{\mathbf{c}_k^-}) \in I_\MP.
\]
If we set $u$ as the least common multiple of the elements $x^\mathbf{v}$ induced by all the generators $f_\mathbf{e}$ of $I_\Lambda$ the assertion follows.
\end{proof}

\section{Quadratic graded reverse lexicographic Gr\"obner basis}\label{sec:gb and primality}

Consider the total orders $<^i$, with $i \in \{1, \dots, 8\}$, on $V(\MP)$ induced by the pairs of arrows displayed in Table \ref{tab: arrows}.

\begin{table}[H]
\begin{tabular}{c | c | c | c | c | c | c | c }
$<^1$ & $<^2$ & $<^3$ & $<^4$ & $<^5$ & $<^6$ & $<^7$ & $<^8$ \\
\hline
$\left( \downarrow \ ,  \rightarrow \right)$  &  $\left( \downarrow \ , \leftarrow\right)$  & $\left(\uparrow \ ,  \leftarrow\right)$ & $\left(\uparrow \ , \rightarrow\right)$ & $ \left(\leftarrow \ , \uparrow\right)$  & $\left( \rightarrow \ , \uparrow\right)$ & $\left( \rightarrow \ , \downarrow\right)$ & $\left( \leftarrow \ ,  \downarrow\right)$ 
\vspace{0.3cm}
\end{tabular}\caption{Pairs of arrows that induce the total orders.}\label{tab: arrows}
\end{table}
Given $a = (a_1, a_2)$ and  $b=(b_1, b_2)$, the horizontal arrows refer to the first coordinates, $a_1$ and $b_1$, while the vertical ones to the second coordinates, $a_2$ and $b_2$. For any pair of arrows, that is for any total order, first, one considers the arrow on the left, and then the other one. Each arrow goes from the minimum to the maximum. For instance, $a <^1 b$ if $a_1 < b_1 $ or $a_1 =b_1 $ and $ a_2 > b_2$. That is, let $a,b,c,d\in V(\MP)$ be as in Figure \ref{Fig:abcd}.
\begin{figure}[H]
\begin{tikzpicture}
\draw (0,0)--(1,0);
\draw (0,0)--(0,1);
\draw (1,1)--(1,0);
\draw (1,1)--(0,1);
\filldraw (0,1) circle (1.5pt) node [anchor=south] {$a$}; 
\filldraw (0,0) circle (1.5pt) node [anchor=north] {$b$}; 
\filldraw (1,1) circle (1.5pt) node [anchor=south] {$c$}; 
\filldraw (1,0) circle (1.5pt) node [anchor=north] {$d$}; 
\end{tikzpicture}
\caption{}\label{Fig:abcd}
\end{figure}
Then it holds $a<^1 b<^1 c<^1 d$. This explains the direction of the arrows.
While $(a_1, a_2) <^5 (b_1, b_2)$ if $a_2 < b_2$ or $a_2 = b_2$ and $a_1>b_1$. In a similar way, one gets all the other orders. 

The total orders $<^i$, with $i \in \{1, \dots, 8\}$, on the vertices of $\MP$ induce in a natural way the graded reverse lexicographic monomial orders $<_{\grevlex}^i$, with $i \in \{1, \dots, 8\}$, on $S=\mathbb{K}[x_v | v \in V(\MP)]$, respectively.

As in \cite[Theorem 4.1]{Qu}, the next proposition gives a necessary and sufficient condition on $\MP$ for having $\MM$ as quadratic reduced Gr\"obner basis of $I_{\MP}$.

From now on, we set $\MO = \{1,3,5,7\}$ and $\ME = \{2,4,6,8\}$.

\begin{proposition}\label{prop:quadraticGB}
Let $\mathcal{P}$ be a polyomino. $\MM$ forms a reduced Gr\"obner basis of $I_{\MP}$ with respect to $<_{\grevlex}^i$, for $i \in \MO$,  if and only if for any two intervals $[a,b]$ and $[b,e]$ of $\MP$, at least one interval between $[a,f]$ and $[a,g]$ is an inner interval of $\MP$, where $f$ and $g$ are the anti-diagonal corners of $[b,e]$. Similarly, $\MM$ forms a reduced Gr\"obner basis of $I_{\MP}$ with respect to $<_{\grevlex}^i$, for $i \in \ME$, if and only if for any two inner intervals $[a,b]$ and $[e,f]$ of $\mathcal{P}$, with $d$ anti-diagonal corner of both the inner intervals, either $a,e$ or $b,f$ are anti-diagonal corners of an inner interval of $\MP$.
\end{proposition}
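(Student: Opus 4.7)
The proof applies Buchberger's criterion to $\MM$. The preliminary task is to identify the leading monomials: for an inner interval $[p,q]$ with diagonal corners $p,q$ and anti-diagonal corners $c,d$, a direct inspection of each total order in Table \ref{tab: arrows} shows that for $i\in\MO$ one of the anti-diagonal vertices $c,d$ is strictly $<^i$-smallest among $\{p,q,c,d\}$, so the reverse-lexicographic rule forces the leading monomial of $x_p x_q - x_c x_d$ to be the diagonal product $x_p x_q$; for $i\in\ME$ a symmetric computation identifies the anti-diagonal product $x_c x_d$ as the leading monomial. Reducedness of the candidate Gr\"obner basis is then automatic, since the two monomials of every inner $2$-minor involve four distinct vertices and thus no leading monomial divides a trailing monomial of another element of $\MM$.

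Fix now $i\in\MO$. Two inner $2$-minors have non-coprime leading monomials exactly when their underlying inner intervals share a diagonal vertex. In the degenerate configurations where this shared vertex is the common lower-left or the common upper-right corner of the two intervals, the $S$-polynomial reduces to zero immediately using the $2$-minor of the smaller rectangle, which is inner by hypothesis. The substantive situation is that of two inner intervals $[a,b]$ and $[b,e]$ meeting only at the diagonal vertex $b$ (that is, $b$ is the upper-right corner of $[a,b]$ and the lower-left corner of $[b,e]$). Denoting by $c_1,d_1$ the anti-diagonal corners of $[a,b]$ and by $f,g$ those of $[b,e]$, a direct computation yields
\[
S(f_1,f_2) \;=\; x_a x_f x_g \;-\; x_e x_{c_1} x_{d_1}.
\]

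The degree-two submonomials of the two terms of $S(f_1,f_2)$ that form diagonal pairs, and hence are potential leading monomials of elements of $\MM$, are exactly $x_a x_f$, $x_a x_g$, $x_{c_1}x_e$ and $x_{d_1}x_e$; these correspond respectively to the rectangles $[a,f]$, $[a,g]$, $[c_1,e]$ and $[d_1,e]$. A short union-of-cells argument, exploiting that $[a,b]$ and $[b,e]$ are already inner, shows that $[c_1,e]$ (resp.\ $[d_1,e]$) is inner if and only if $[a,f]$ (resp.\ $[a,g]$) is inner. Consequently, $S(f_1,f_2)$ admits any nontrivial reduction modulo $\MM$ precisely when $[a,f]$ or $[a,g]$ is an inner interval of $\MP$; assuming for instance $[a,f]$ is inner, subtracting $x_g$ times its $2$-minor leaves a multiple of the $2$-minor of the merged rectangle $[c_1,e]$, completing the reduction to zero.

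The even case $i\in\ME$ is entirely parallel: leading monomials are anti-diagonal products, $S$-polynomials with non-coprime leading terms arise from two inner intervals $[a,b]$ and $[e,f]$ sharing a common anti-diagonal corner $d$, and the analogous calculation shows reducibility is equivalent to the condition that one of the pairs $(a,e)$, $(b,f)$ is the anti-diagonal of an inner interval of $\MP$. The main technical obstacle is the combinatorial bookkeeping: enumerating all possible overlaps of two leading monomials, checking that the coincident-corner sub-cases really reduce without any hypothesis, and verifying via the union-of-cells identity that the geometric criterion in the statement faithfully matches each candidate leading monomial with the corresponding reduction.
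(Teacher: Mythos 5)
Your proof is correct and takes essentially the same route as the paper: compute the leading monomials for the grevlex orders (diagonal products for $i\in\MO$, anti-diagonal products for $i\in\ME$), invoke Buchberger's criterion, and split the non-coprime $S$-pairs according to which diagonal vertex the two inner intervals share, with the geometric hypothesis of the proposition entering precisely in the ``antipodal'' case $b=p$. Your pairing observation — that $[a,b]$ and $[b,e]$ being inner forces $[a,f]$ (resp.\ $[a,g]$) to be inner exactly when $[d_1,e]$ (resp.\ $[c_1,e]$) is, under the natural matching of upper-left with upper-left and lower-right with lower-right anti-diagonal corners — is exactly the fact the paper uses implicitly in its explicit coordinate reduction, so this is a slightly cleaner packaging of the same argument rather than a different proof; do be careful to pin down which of $f,g$ matches which of $c_1,d_1$, since the written pairing in your sketch is ambiguous and under one labeling convention it comes out swapped.
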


\begin{proof}
We are going to prove the statement only for $<_{\grevlex}^1$. The others follow in a similar way. The set $\MM$ forms a reduced Gr\"obner basis of $I_{\MP}$ with respect to $<_{\grevlex}^1$ if and only if all $S$-polynomials of inner 2-minors of $I_{\MP}$ reduce to 0. Let $f, g \in \MM$, where $f=x_ax_b-x_cx_d$  is associated to the inner interval $[a,b]$ of $\MP$ and $g=x_px_q-x_rx_s$ is associated to the inner interval $[p,q]$ of $\MP$. In the following, we denote by $S$ the $S$-polynomial between $f$ and $g$ and by $\mathrm{in}(h)$ the leading monomial of a polynomial $h$. We consider the non-trivial cases when $\gcd(\ini(f), \ini(g)) \neq 1$. Moreover, if one of the inner intervals, namely $[a,b]$, is contained in the second one, namely $[p,q]$, $S$ reduces to $0$ since the polyomino ideal is generated by all inner 2-minors. In the following, denote by $<$ the total order $<^1$ on the vertices of $\MP$. Without loss of generality, let $a \leq p$. Therefore, we have to consider the following cases: $a=p$, $b=q$, and $b=p$.\\

  \begin{figure}[H]
 \begin{subfigure}[h]{0.3\textwidth}
\centering
     \resizebox{0.6\textwidth}{!}{
  \begin{tikzpicture}

\draw[thick] (4,2) --  (5,2);
\draw[thick] (4,1) --  (6,1);
\draw[thick] (4,0) --  (6,0);

\draw[thick] (4,0) --  (4,2);
\draw[thick] (5,0) --  (5,2);
\draw[thick] (6,0) --  (6,1);

\fill[fill=gray, fill opacity=0.3] (4,0) -- (5,0)-- (5,2) -- (4,2);
\fill[fill=gray, fill opacity=0.3] (5,0) -- (6,0)-- (6,1) -- (5,1);
\fill[fill=gray, fill opacity=0.5] (4,0) -- (5,0)-- (5,1) -- (4,1);

\filldraw (4,0) circle (1.5pt) node [anchor=north] {$a$}; 
\filldraw (4,1) circle (1.5pt) node [anchor=east] {$c$}; 
\filldraw (5,0) circle (1.5pt) node [anchor=north] {$s$}; 
\filldraw (6,0) circle (1.5pt) node [anchor=north] {$d$}; 
\filldraw (5,1.2) circle (0pt) node [anchor=west] {$e$}; 
\filldraw (6,1) circle (1.5pt) node [anchor=west] {$b$}; 
\filldraw (4,2) circle (1.5pt) node [anchor=south] {$r$}; 
\filldraw (5,2) circle (1.5pt) node [anchor=south] {$q$}; 
\filldraw (5,1) circle (1.5pt);

   \end{tikzpicture}}
   \caption{Case $a=p$.}\label{fig:a=p}
   \end{subfigure}
  ~ 
   \begin{subfigure}[h]{0.3\textwidth}
\centering
     \resizebox{0.6\textwidth}{!}{
  \begin{tikzpicture}
  
  \draw[thick] (0,2) --  (2,2);
\draw[thick] (0,1) --  (2,1);
\draw[thick] (1,0) --  (2,0);

\draw[thick] (0,1) --  (0,2);
\draw[thick] (1,0) --  (1,2);
\draw[thick] (2,0) --  (2,2);

\fill[fill=gray, fill opacity=0.3] (1,0) -- (2,0)-- (2,1) -- (1,1);
\fill[fill=gray, fill opacity=0.3] (0,1) -- (2,1)-- (2,2) -- (0,2);
\fill[fill=gray, fill opacity=0.5] (1,1) -- (2,1)-- (2,2) -- (1,2);

\filldraw (2,0) circle (1.5pt) node [anchor=north] {$s$}; 
\filldraw (1,0) circle (1.5pt) node [anchor=north] {$p$}; 
\filldraw (1,2) circle (1.5pt) node [anchor=south] {$r$}; 
\filldraw (2,1) circle (1.5pt) node [anchor=west] {$d$}; 
\filldraw (2,2) circle (1.5pt) node [anchor=south] {$b$}; 
\filldraw (0,2) circle (1.5pt) node [anchor=south] {$c$}; 
\filldraw (0,1) circle (1.5pt) node [anchor=east] {$a$}; 
\filldraw (1,1) circle (1.5pt) node [anchor=north east] {$e$};

\end{tikzpicture}} \caption{Case $b=q$.}\label{fig:b=q}
\end{subfigure}
  ~ 
   \begin{subfigure}[h]{0.3\textwidth}
\centering
     \resizebox{0.6\textwidth}{!}{
  \begin{tikzpicture}

\draw[thick] (1,2) --  (2,2);
\draw[thick] (0,1) --  (2,1);
\draw[thick] (0,0) --  (1,0);

\draw[thick] (0,0) --  (0,1);
\draw[thick] (1,0) --  (1,2);
\draw[thick] (2,1) --  (2,2);

\draw[dashed] (1,0)--(2,0);
\draw[dashed] (2,0)--(2,1);
\draw[dashed] (0,1)--(0,2);
\draw[dashed] (0,2)--(1,2);

\fill[fill=gray, fill opacity=0.3] (0,0) -- (1,0)-- (1,1) -- (0,1);
\fill[fill=gray, fill opacity=0.3] (1,1) -- (2,1)-- (2,2) -- (1,2);

\filldraw (0,0) circle (1.5pt) node [anchor=north] {$a$}; 
\filldraw (0,1) circle (1.5pt) node [anchor=east] {$c$}; 
\filldraw (1,0) circle (1.5pt) node [anchor=north] {$d$}; 
\filldraw (1,2) circle (1.5pt) node [anchor=south] {$r$}; 
\filldraw (1,1) circle (1.5pt);
\filldraw (1,1.2) circle (0pt) node [anchor=east] {$b$}; 
\filldraw (2,1) circle (1.5pt) node [anchor=west] {$s$}; 
\filldraw (2,2) circle (1.5pt) node [anchor=west] {$q$}; 
\filldraw (0,2) circle (1.5pt) node [anchor=east] {$e$}; 
\filldraw (2,0) circle (1.5pt) node [anchor=north] {$t$};

   \end{tikzpicture}}
   \caption{Case $b=p$.}\label{fig:b=p}
   \end{subfigure}	
   \caption{}\label{fig: cases}
   \end{figure}

\noindent Let $a=p$, that is $f=x_ax_b-x_cx_d$ and $g=x_ax_q-x_rx_s$, and assume $r < c < a < q < s< b< d$ as in Figure \ref{fig:a=p}. We have $S = x_q x_c x_d - x_b x_r x_s$ and $\ini(S) = x_q x_c x_d$. Since $\ini(f_{c,q}) = x_cx_q$, we get
   \[
   S = x_d(x_cx_q - x_rx_e) - x_r(x_sx_b - x_ex_d),
   \]
that is $S$ reduces to 0 with respect to $\MM$.\\
Let $b=q$, and assume $c < a < r < p < b < d < s $ as in Figure \ref{fig:b=q}. We have $S = x_ax_rx_s - x_cx_dx_p$ and $\ini(S) = x_ax_rx_s$. Since $\ini(f_{a,r}) = x_ax_r$, we get
\[
S = x_s(x_ax_r - x_cx_e) - x_c(x_px_d - x_ex_s),
\]
that is $S$ reduces to 0 with respect to $\MM$.\\
Let $b=p$, and assume $c < a < r < b < d < q < s$ as in Figure \ref{fig:b=p}. We have $S=x_a x_r x_s - x_q x_c x_d$ and $\ini(S) = x_ax_rx_s$. If neither $[a,s]$ nor $[a,r]$ is an inner interval of $\MP$, then $S$ does not reduce to 0 with respect to $\MM$ and the Gr\"obner basis is not quadratic. Furthermore, if $[a,s]$ is an inner interval of $\MP$, since $\ini(f_{a,s}) = x_ax_s$, we get
\[
S = x_r(x_ax_s-x_cx_t) -x_c(x_dx_q-x_rx_t).
\]
If $[a,r]$ is an inner interval of $\MP$, since $\ini(f_{a,r}) = x_ax_r$, we get
\[
S = x_s(x_ax_r -x_ex_d) -x_d(x_cx_q - x_ex_s).
\]
It shows that in both situations $S$ reduces to 0 with respect to $\MM$. The latter shows that $S$ reduces to 0 with respect to $\MM$ if and only if either $[a,s]$ or $[a,r]$ is an inner interval of $\MP$ and the thesis follows.
\end{proof}

Let $V(\MP)=\{v_1, \dots, v_n\}$. Given a monomial order $<$ such that we have 
\[
 x_{v_1}<x_{v_2}<\cdots<x_{v_n},
\]
we define by $<_v$, with $v = v_k \in V(\MP)$, the following monomial order:
\[
 x_{v_k}<x_{v_{k+1}}<\cdots<x_{v_n}<x_{v_1}<x_{v_2}<\cdots<x_{v_{k-1}}.
\]
From now on, we will denote $(<^i_{\grevlex})_v$ by $<_v^i$, for any $i \in \{1, \dots, 8\}$. \\

\begin{definition}\label{def: cond Pi}
Let $\MP$ be a polyomino and let $v \in V(\MP)$. We say that $v$ satisfies the condition $\pi_1$ if it fulfils at least one of the following conditions:
\begin{enumerate}[(I)]
\item There exist two inner intervals $I=[a,b]$ and $J=[b,q]$ of $\MP$, with $v$ upper left corner of $I$, and $s$ the lower right corner of $J$, such that $[v,q]$ is inner interval of $\MP$, whereas the interval $[a,s]$ is not (see Table \ref{Tab: Cond Pi}, Case $\pi_1$ (I)).
\item There exist two inner intervals $K=[a,b]$ and $L=[p,q]$, with $v$ lower right corner of $K$ and upper left corner of $L$, such that the interval having $b$ and $q$ as anti-diagonal corners is inner interval of $\MP$, whereas the interval having $a$ and $p$ as anti-diagonal corners is not (see Table \ref{Tab: Cond Pi}, Case $\pi_1$ (II)).
\end{enumerate}
\end{definition}

In a similar way, one can define $v$ satisfying the condition $\pi_i$, for $i \in \{2, \dots, 8\}$, if it fulfils at least one of the cases (I) and (II) displayed in Table \ref{Tab: Cond Pi}. 

\begin{table}[H]
 \centering
 \resizebox{0.8\textwidth}{!}{
   \begin{tabular}{|c|c|c|c|c|}
   \hline
   &$\pi_1$     &$\pi_2$           &$\pi_3$    &$\pi_4$\\
   \hline
 \parI &\oneI &\twoI &\threeI &\fourI \\
   \hline
\parII &\oneII &\twoII &\threeII &\fourII		\\
\hline
\hline
   &$\pi_5$  &$\pi_6$ &$\pi_7$ &$\pi_8$ \\
   \hline
 \parI &\fiveI &\sixI &\sevenI &\eightI \\
   \hline
\parII &\fiveII &\sixII &\sevenII &\eightII \\
   \hline
   \end{tabular}}
   \ \\ \ \\ \caption{Conditions $\pi_i$, for $i=1, \dots, 8$.}\label{Tab: Cond Pi}
   \end{table}

\begin{proposition}\label{prop:pik}
Let $\MP$ be a polyomino such that $I_{\MP}$ has $\MM$ as reduced Gr\"obner basis with respect to $<^i_{\grevlex}$, with $i \in \MO$ ($i \in \ME$, respectively). If $v \in  V(\MP)$ does not satisfy $\pi_k$ for some $k \in \MO$ ($k \in \ME$, respectively), then $\MM$ forms a reduced Gr\"obner basis of $I_\MP$ with respect to $<_v^k$.
\end{proposition}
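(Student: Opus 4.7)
The plan is to verify Buchberger's criterion for $\MM$ under $<^k_v$: every $S$-polynomial of a pair of inner $2$-minors of $\MP$ must reduce to $0$ modulo $\MM$. I would treat the case $k=1$ in detail; the remaining orders in $\MO$ are obtained by rotational symmetry, and the statement for $\ME$ is the ``anti-diagonal'' counterpart of the same argument. As a first step, I would prove a local lemma controlling how the leading monomial of a single inner $2$-minor $f_{a,b}=x_ax_b-x_cx_d$ (with $a,b$ diagonal and $c,d$ anti-diagonal corners of $[a,b]$) changes under $<^1_v$. Since $<^1_v$ is merely the cyclic shift of $<^1$ that makes $x_v$ smallest, the relative $<^1_v$-order on $\{a,b,c,d\}$ agrees with the $<^1$-order unless $v$ separates them in $<^1$. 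A short case check on the position of $v$ in the $<^1$-chain $c<^1 a<^1 b<^1 d$ shows that $\ini_{<^1_v}(f_{a,b})=x_ax_b$ unless $c<^1 v\leq^1 b$, in which case $\ini_{<^1_v}(f_{a,b})=x_cx_d$; that is, the leading monomial ``flips'' from diagonal to anti-diagonal exactly on this range.

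Next, following the three-case schematic of the proof of Proposition~\ref{prop:quadraticGB}, namely $a=p$, $b=q$, and $b=p$, where $f$ comes from $[a,b]$ and $g$ from $[p,q]$, I would examine the $S$-polynomial of every overlapping pair under $<^1_v$. For each pair I would subdivide according to which of the leading monomials involved (those of $f$, of $g$, and of the auxiliary reducing $2$-minors used in the reduction of Proposition~\ref{prop:quadraticGB}) are flipped. When none of them is flipped, the entire reduction from Proposition~\ref{prop:quadraticGB} carries over verbatim to $<^1_v$. When some leading monomial has flipped, I would recompute the relevant $S$-polynomial under $<^1_v$ and search for an alternative reducer in $\MM$ whose $<^1_v$-leading monomial divides $\ini_{<^1_v}(S(f,g))$.

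The hypothesis that $v$ does not satisfy $\pi_1$ enters exactly here. Cases $\pi_1$~(I) and $\pi_1$~(II) of Definition~\ref{def: cond Pi} are calibrated to describe precisely the geometric configurations in which the flipped-reducer obstruction arises. For instance, in case $b=p$ with $v$ the upper-left corner of $[a,b]$, the auxiliary reducer guaranteed by Proposition~\ref{prop:quadraticGB} is either $f_{a,r}$ or $f_{a,s}$; the first one has its leading monomial flipped under $<^1_v$, so its $<^1_v$-leading monomial no longer divides $\ini_{<^1_v}(S(f,g))=x_ax_rx_s$, and the reduction is forced to proceed through $f_{a,s}$. Since $[v,q]$ is then automatically an inner interval of $\MP$ (being the union of $[a,r]$ and $[b,q]$), the requirement that $[a,s]$ be an inner interval is equivalent to $v$ not satisfying $\pi_1$~(I). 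A parallel matching with $\pi_1$~(II) handles the sub-cases in which two flipped minors meeting at $v$ force the use of a bridging interval on the opposite side of $v$.

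The main obstacle I expect is the exhaustive bookkeeping in case $b=p$: one must simultaneously track which of $f$, $g$, and each candidate auxiliary reducer has a flipped leading monomial under $<^1_v$, recompute $\ini_{<^1_v}(S(f,g))$, and identify an alternative reducer in $\MM$. The sub-cases of Definition~\ref{def: cond Pi} are designed so that, in every configuration, either Proposition~\ref{prop:quadraticGB} applied to $<^1_{\grevlex}$ or the failure of $\pi_1$ supplies the needed minor; the bulk of the proof consists in making this matching explicit across all sub-cases and for both parts (I) and (II) of $\pi_1$.
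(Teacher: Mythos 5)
Your overall strategy---verify Buchberger's criterion, control leading-term flips via a local ``cyclic shift'' lemma, and use the failure of $\pi_k$ to supply the alternative reducer when the default one has its leading monomial flipped---matches the paper's proof in spirit, and your flip lemma ($\ini_{<^1_v}(f_{a,b})$ equals the anti-diagonal term $x_cx_d$ precisely when $c<^1 v\leq^1 b$, with $c<^1 a<^1 b<^1 d$ being the $<^1$-chain of the corners of $[a,b]$) is a clean piece of bookkeeping that the paper uses implicitly. Your analysis of the case $b=p$, $v=c$, where $f_{a,r}$ has a flipped leading term $x_ex_d$ that no longer divides $\ini_{<^1_v}(S)=x_ax_rx_s$, forcing the reduction through $f_{a,s}$ and hence invoking $\pi_1$(I), is exactly what the paper does.

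However, there is a genuine gap in your case enumeration. You propose to ``follow the three-case schematic of Proposition~\ref{prop:quadraticGB}, namely $a=p$, $b=q$, $b=p$''. Those three cases were exhaustive for $<^1_{\grevlex}$ precisely because under that order \emph{every} inner $2$-minor has its diagonal term $x_ax_b$ as leading monomial, so two S-polynomials can have a common factor only if they share a diagonal corner. Once you pass to $<^1_v$, your own flip lemma shows that some inner $2$-minors now have the anti-diagonal term $x_cx_d$ as leading monomial, so two minors can produce a non-coprime pair of leading monomials by sharing an \emph{anti-diagonal} corner, or by sharing a diagonal corner of one with an anti-diagonal corner of the other. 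These pairs never arise in the proof of Proposition~\ref{prop:quadraticGB} and are therefore not covered by re-running its three cases. The paper's proof handles them explicitly: after $a=p$ it lists the cases $b,d\in\{p,q,r,s\}$ and $c\in\{p,r\}$, then works out $b=p$, $b=r$, $d=q$, $c=r$ in detail (the sub-case $b=r$ with $p>d$, for instance, is where $\pi_1$(II) enters, through the bridging interval $[f,d]$), and relegates $b\in\{q,s\}$, $d\in\{p,r,s\}$, $c=p$ to the reader. Without extending your enumeration to all pairs that overlap \emph{after} flipping---not merely re-examining the pairs that overlapped before---Buchberger's criterion is not actually verified, so as written the argument is incomplete.
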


\begin{proof}
Assume that $\MM$ forms a reduced Gr\"obner basis of $I_\MP$ with respect to $<^i_{\grevlex}$, with $i \in \MO$. Let $f=x_ax_b-x_cx_d$ and $g=x_px_q-x_rx_s$ be associated to the inner interval $[a,b]$ and $[p,q]$ of $\MP$, respectively. Let $v \in V(\MP)$.  We have to show that for each pair of inner 2-minors, $f$ and $g$, the corresponding $S$-polynomial reduces to $0$ with respect to a fixed monomial order $<_v^i$, with $i \in \MO$. In the following, we denote by $S$ the $S$-polynomial between $f$ and $g$, by $\mathrm{in}(h)$ the leading monomial of a polynomial $h$, and by $f_{m,n}$ the inner 2-minor associated to the inner interval $[m,n]$ of $\MP$.

We leave to the reader the trivial cases $\{a,b,c,d\}\cap\{p,q,r,s\}=\varnothing$, and  $|\{a,b,c,d\}\cap\{p,q,r,s\}|=2$ where $S$ reduces to $0$ since the polyomino ideal is generated by all inner 2-minors.

Note that if, for all vertices $w\in\{a,b,c,d,p,q,r,s\}$ and a monomial order $<^i_{\grevlex}$, for some $i \in \MO$, it holds $x_w <_v^i x_v$ or $x_v <_v^i x_w$, then $S$ reduces to 0 with respect to $<_v^i$, since it reduces to 0 with respect to $<^i_{\grevlex}$. 

If one of the inner intervals, namely $[a,b]$, is contained in the second one, namely $[p,q]$, $S$ reduces to $0$  since the polyomino ideal is generated by all inner 2-minors. In the following, denote by $<$ the total order $<^1$ on the vertices of $\MP$. Without loss of generality, let $a \leq p$. Therefore, we have to consider the following cases: 
\[
a = p, \hspace{2cm} b,d \in \{p,q,r,s\}, \hspace{2cm} c \in \{p,r\}.
\]
If $v$ does not satisfy the condition $\pi_k$, for some $k \in \MO$, we fix the monomial order $<_v^k$. Without loss of generality, assume $k=1$.

Let $a=p$, that is $f=x_ax_b-x_cx_d$ and $g=x_ax_q-x_rx_s$, and $r<c<a<q<s<b<d$ as in Figure \ref{fig: a=p}.

  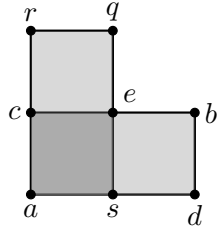
\begin{figure}[H]
  \centering
    \resizebox{0.25\textwidth}{!}{
  \begin{tikzpicture}

\draw[thick] (4,2) --  (5,2);
\draw[thick] (4,1) --  (6,1);
\draw[thick] (4,0) --  (6,0);

\draw[thick] (4,0) --  (4,2);
\draw[thick] (5,0) --  (5,2);
\draw[thick] (6,0) --  (6,1);

\fill[fill=gray, fill opacity=0.3] (4,0) -- (5,0)-- (5,2) -- (4,2);
\fill[fill=gray, fill opacity=0.3] (5,0) -- (6,0)-- (6,1) -- (5,1);
\fill[fill=gray, fill opacity=0.5] (4,0) -- (5,0)-- (5,1) -- (4,1);

\filldraw (4,0) circle (1.5pt) node [anchor=north] {$a$}; 
\filldraw (4,1) circle (1.5pt) node [anchor=east] {$c$}; 
\filldraw (5,0) circle (1.5pt) node [anchor=north] {$s$}; 
\filldraw (6,0) circle (1.5pt) node [anchor=north] {$d$}; 
\filldraw (5,1.2) circle (0pt) node [anchor=west] {$e$}; 
\filldraw (6,1) circle (1.5pt) node [anchor=west] {$b$}; 
\filldraw (4,2) circle (1.5pt) node [anchor=south] {$r$}; 
\filldraw (5,2) circle (1.5pt) node [anchor=south] {$q$}; 
\filldraw (5,1) circle (1.5pt);

   \end{tikzpicture}}
   \caption{Case $a=p$.}\label{fig: a=p}
   \end{figure} 
\noindent We start by observing that if $r < v \leq b$, then $\gcd(\mathrm{in} (f), \mathrm{in} (g))=1$. In the other cases, we have $S= x_rx_sx_b - x_cx_dx_q$. If $b <v \leq d$, then $\mathrm{in}(S)= x_rx_sx_b$. Since $\ini(f_{s,b}) = x_sx_b$, then
\[
S= x_r(x_sx_b - x_ex_d) -x_d(x_cx_q-x_rx_e),
\]
that is $S$ reduces to 0 with respect to the inner 2-minors $f_{s,b}$ and $f_{c,q}$. If $v=r$, then $\ini(S) = x_cx_dx_q$. Since $\ini(f_{c,q}) = x_cx_q$, then
\[
S = -x_d(x_cx_q-x_rx_e) +x_r(x_sx_b-x_ex_d),
\]
that is $S$ reduces to 0 with respect to the inner 2-minors $f_{c,q}$ and $f_{s,b}$.

Let $b=p$, that is $f= x_ax_b - x_cx_d$ and $g=x_bx_q - x_rx_s$, and$c<a<r<b<d<q<s$, as in Figure \ref{Fig: b=p}. 

 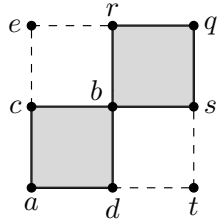
\begin{figure}[H]
  \centering
    \resizebox{0.25\textwidth}{!}{
  \begin{tikzpicture}

\draw[thick] (1,2) --  (2,2);
\draw[thick] (0,1) --  (2,1);
\draw[thick] (0,0) --  (1,0);

\draw[thick] (0,0) --  (0,1);
\draw[thick] (1,0) --  (1,2);
\draw[thick] (2,1) --  (2,2);

\draw[dashed] (1,0)--(2,0);
\draw[dashed] (2,0)--(2,1);
\draw[dashed] (0,1)--(0,2);
\draw[dashed] (0,2)--(1,2);

\fill[fill=gray, fill opacity=0.3] (0,0) -- (1,0)-- (1,1) -- (0,1);
\fill[fill=gray, fill opacity=0.3] (1,1) -- (2,1)-- (2,2) -- (1,2);

\filldraw (0,0) circle (1.5pt) node [anchor=north] {$a$}; 
\filldraw (0,1) circle (1.5pt) node [anchor=east] {$c$}; 
\filldraw (1,0) circle (1.5pt) node [anchor=north] {$d$}; 
\filldraw (1,2) circle (1.5pt) node [anchor=south] {$r$}; 
\filldraw (1,1) circle (1.5pt);
\filldraw (1,1.2) circle (0pt) node [anchor=east] {$b$}; 
\filldraw (2,1) circle (1.5pt) node [anchor=west] {$s$}; 
\filldraw (2,2) circle (1.5pt) node [anchor=west] {$q$}; 
\filldraw (0,2) circle (1.5pt) node [anchor=east] {$e$}; 
\filldraw (2,0) circle (1.5pt) node [anchor=north] {$t$};

   \end{tikzpicture}}
   \caption{Case $b=p$.}\label{Fig: b=p}
   \end{figure}
   
\noindent If $c < v \leq q  $, then $\gcd(\mathrm{in} (f), \mathrm{in} (g)) =1$.
In the other cases, we have  $S = x_ax_rx_s - x_qx_cx_d$.  If $q < v \leq s$, then $\mathrm{in}(S)=x_qx_cx_d$. By hypothesis, $\MM$ forms a reduced Gr\"obner basis of $I_{\MP}$ with respect to $<_{\grevlex}^i$ with $i\in \MO$, hence, from Proposition \ref{prop:quadraticGB}, either $[c,q]$ or $[d,q]$ is an inner interval of $\MP$, with $\mathrm{in}(f_{c,q}) = x_c x_q$ and  $\mathrm{in}(f_{d,q}) = x_d x_q$, and then 
\[
S = x_d(x_cx_q - x_ex_s) - x_s (x_ax_r -x_ex_d)
\]
or 
\[
S= -x_c(x_dx_q -x_rx_t) + x_r( x_ax_s-x_cx_t),
\]
that is $S$ reduces to 0 with respect to the inner 2-minors either $f_{c,q}$ and $f_{a,r}$ or $f_{d,q}$ and $f_{a,s}$. If $v=c$, then $\mathrm{in} (S) = x_ax_rx_s$. By hypothesis, either $[a,r]$ or $[a,s]$ is an inner interval of $\MP$ , with $\mathrm{in}(f_{a,r}) = x_ex_d$ and $\mathrm{in}(f_{a,s}) = x_ax_s$. If $[a,r]$ is an inner interval of $\MP$, but $[a,s]$ is not, then $v$ satisfies the condition $\pi_1$, so we have not to consider this case. Whereas, if $[a,s]$ is an inner interval, since $\ini(f_{a,s})=x_ax_s$, then 
\[
S = x_r(x_ax_s-x_cx_t) -x_c(x_dx_q-x_rx_t),
\]
it follows that $S$ reduces to 0. \\
Note that when $v=c$, if $[a,r]$ is an inner interval of $\MP$, but $[a,s]$ is not, that is $v$ satisfies $\pi_1$, in particular the condition $\pi_1$ (I), then $S$ does not reduce to 0 with respect to $\MM$ and $<_v^1$. In fact, $\mathrm{in}(S)= x_ax_rx_s$, but the monomials $x_ax_r$, $x_ax_s$, and $x_rx_s$ are not leading monomials of any inner 2-minor of $\MP$. This situation justifies the hypothesis $v$ not satisfying the condition $\pi_1$.

Let $b=r$, that is that is $f=x_ax_b - x_cx_d$ and $g = x_px_q -x_bx_s$. We have to distinguish two different situations: $p<d$ (see Figure \ref{fig:b=r} (A)) or $p>d$ (see Figure \ref{fig:b=r} (B)).

 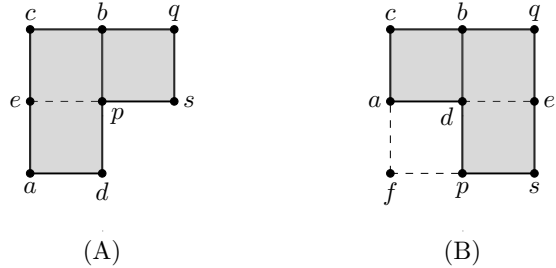
\begin{figure}[H]
  \centering
    \resizebox{0.6\textwidth}{!}{
  \begin{tikzpicture}

\draw[thick] (0,2) --  (2,2);
\draw[thick] (1,1) --  (2,1);
\draw[thick] (0,0) --  (1,0);

\draw[thick] (0,0) --  (0,2);
\draw[thick] (1,0) --  (1,2);
\draw[thick] (2,1) --  (2,2);

\draw[dashed] (0,1)--(1,1);

\fill[fill=gray, fill opacity=0.3] (0,0) -- (1,0)-- (1,2) -- (0,2);
\fill[fill=gray, fill opacity=0.3] (1,1) -- (2,1)-- (2,2) -- (1,2);

\filldraw (0,0) circle (1.5pt) node [anchor=north] {$a$}; 
\filldraw (0,1) circle (1.5pt) node [anchor=east] {$e$}; 
\filldraw (1,0) circle (1.5pt) node [anchor=north] {$d$}; 
\filldraw (1,0.8) circle (0pt) node [anchor=west] {$p$}; 
\filldraw (1,1) circle (1.5pt);
\filldraw (2,1) circle (1.5pt) node [anchor=west] {$s$}; 
\filldraw (0,2) circle (1.5pt) node [anchor=south] {$c$}; 
\filldraw (1,2) circle (1.5pt) node [anchor=south] {$b$}; 
\filldraw (2,2) circle (1.5pt) node [anchor=south] {$q$}; 

\filldraw (1,-0.8) circle (0pt) node [anchor=north] {(A)}; 
\filldraw (1+5,-0.8) circle (0pt) node [anchor=north] {(B)};

\draw[thick] (5,2) --  (7,2);
\draw[thick] (5,1) --  (6,1);
\draw[thick] (6,0) --  (7,0);

\draw[thick] (5,1) --  (5,2);
\draw[thick] (6,0) --  (6,2);
\draw[thick] (7,0) --  (7,2);

\draw[dashed] (1+5,1)--(2+5,1);
\draw[dashed] (0+5,0)--(0+5,1);
\draw[dashed] (0+5,0)--(1+5,0);

\fill[fill=gray, fill opacity=0.3] (5,1) -- (5,2)-- (6,2) -- (6,1);
\fill[fill=gray, fill opacity=0.3] (6,0) -- (6,2)-- (7,2) -- (7,0);

\filldraw (5,1) circle (1.5pt) node [anchor=east] {$a$}; 
\filldraw (5,2) circle (1.5pt) node [anchor=south] {$c$}; 
\filldraw (6,0) circle (1.5pt) node [anchor=north] {$p$}; 
\filldraw (6,0.8) circle (0pt) node [anchor=east] {$d$}; 
\filldraw (6,1) circle (1.5pt);
\filldraw (2+5,1) circle (1.5pt) node [anchor=west] {$e$}; 
\filldraw (6,2) circle (1.5pt) node [anchor=south] {$b$}; 
\filldraw (7,0) circle (1.5pt) node [anchor=north] {$s$}; 
\filldraw (7,2) circle (1.5pt) node [anchor=south] {$q$}; 
\filldraw (0+5,0) circle (1.5pt) node [anchor=north] {$f$};

   \end{tikzpicture}}
   \caption{Case $b=r$.}\label{fig:b=r}
   \end{figure} 
   
\noindent Assume $p<d$, then $c<a<b<p<d<q<s$, as in Figure \ref{fig:b=r} (A). If $c \leq v \leq b$ or $q < v \leq s$, then $\gcd(\ini(f), \ini(g)) =1$. In the other cases, $S= x_ax_px_q - x_cx_dx_s$. If $b < v \leq p$ or $d < v \leq q$, then $\ini(S) = x_cx_dx_s$ and $\ini(f_{e,q}) = x_cx_s$.  If $p < v \leq d$, then $\ini(S) =  x_ax_px_q$ and $\ini(f_{a,p}) = x_ax_p$. Therefore,
\[
S= x_d(x_ex_q-x_cx_s) + x_q(x_ax_p-x_ex_d),
\]
that is $S$ reduces to 0 in all of these cases.

\noindent Assume $p>d$, then $c<a<b<d<p<q<s$, as in Figure \ref{fig:b=r} (B). If $c \leq v \leq b$ or $q < v \leq s$, then $\gcd(\ini(f), \ini(g)) =1$. In the other cases, we have $S= x_ax_px_q - x_cx_dx_s$. If $b < v \leq d$, then $\ini(S) = x_ax_px_q$. By hypothesis, $v$ does not satisfy the condition $\pi_1$, hence $[f,d]$ is an inner interval of $\MP$. Since $\ini(f_{f,d})=x_ax_p$, then
\[
S = -x_q(x_fx_d - x_ax_p) + x_d(x_fx_q -x_cx_s),
\]
that is $S$ reduces to 0. If $ d < v \leq q$, then $\ini(S) = x_cx_dx_s$. Since $\ini(f_{p,e}) = x_dx_s$, it follows
\[
S = x_c(x_px_e-x_dx_s) + x_p(x_ax_q-x_cx_e),
\]
that is $S$ reduces to 0.

Note that when $b < v \leq d$, if $[f,d]$ is not an inner interval of $\MP$, then $v$ satisfies $\pi_1$, in particular the condition $\pi_1$ (II). In this case, $S$ does not reduce to 0 with respect to $\MM$ and $<_v^1$. In fact, $\mathrm{in}(S)= x_ax_px_q$, but the monomials $x_ax_p$, $x_ax_q$, and $x_px_q$ are not leading monomials of any inner 2-minor of $\MP$. This situation justifies, once again, the hypothesis $v$ not satisfying the condition $\pi_1$.

Let $d=q$, that is $f=x_ax_b - x_cx_d$ and $g=x_px_d -x_rx_s$, and $c<a<r<p<b<d<s$, as showed in Figure \ref{fig:d=q}. 
 \begin{figure}[H]
  \centering
    \resizebox{0.25\textwidth}{!}{
  \begin{tikzpicture}

\draw[thick] (0,2) --  (2,2);
\draw[thick] (0,1) --  (2,1);
\draw[thick] (1,0) --  (2,0);

\draw[thick] (0,1) --  (0,2);
\draw[thick] (1,0) --  (1,1);
\draw[thick] (2,0) --  (2,2);

\draw[dashed] (1,1)-- (1,2);
\draw[dashed] (0,0)-- (1,0);
\draw[dashed] (0,0)-- (0,1);

\fill[fill=gray, fill opacity=0.3] (1,0) -- (2,0)-- (2,1) -- (1,1);
\fill[fill=gray, fill opacity=0.3] (0,1) -- (2,1)-- (2,2) -- (0,2);

\filldraw (1,2) circle (1.5pt) node [anchor=south] {$e$}; 
\filldraw (0,1) circle (1.5pt) node [anchor=east] {$a$}; 
\filldraw (1,0) circle (1.5pt) node [anchor=north] {$p$}; 
\filldraw (1,1) circle (1.5pt);
\filldraw (1,1.2) circle (0pt) node [anchor=east] {$r$}; 
\filldraw (2,1) circle (1.5pt) node [anchor=west] {$d$}; 
\filldraw (2,2) circle (1.5pt) node [anchor=south] {$b$}; 
\filldraw (2,0) circle (1.5pt) node [anchor=north] {$s$}; 
\filldraw (0,2) circle (1.5pt) node [anchor=south] {$c$}; 
\filldraw (0,0) circle (1.5pt) node [anchor=north] {$f$}; 

   \end{tikzpicture}}\caption{Case $d=q$.}\label{fig:d=q}
   \end{figure}
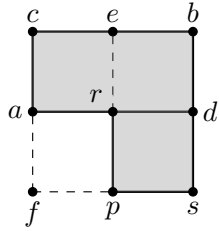
   
   \noindent If either $v=c$ or $r< v \leq s$, then $\gcd(\ini(f), \ini(g)) =1$. In the other cases, we have $S=x_ax_bx_p-x_cx_rx_s$. If $c < v \leq a$, then $\ini(S)=x_cx_rx_s$. Since $\ini(f_{c,r}) = x_cx_r$, then 
   \[
   S= x_s(x_ax_e-x_cx_r) +x_a(x_px_b - x_sx_e),
   \]
   that is $S$ reduces to 0. If $a < v \leq r$, then $\ini(S) = x_ax_bx_p$. By hypothesis, $v$ does not satisfy $\pi_1$, that is $[f,r]$ is an inner interval of $\MP$. Since $\ini(f_{f,r})=x_ax_p$, then 
   \[
   S = -x_b(x_fx_r - x_ax_p) + x_r(x_fx_b - x_cx_s),
   \]
      that is $S$ reduces to 0.

Let $c=r$, that is $f = x_ax_b - x_cx_d$ and $g = x_px_q - x_cx_s$, and $c<p<a<b<d<q<s$, as showed in Figure \ref{fig:c=r}.
   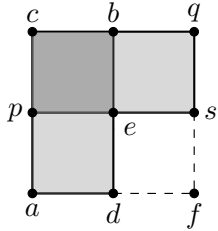
\begin{figure}[H]
  \centering
    \resizebox{0.25\textwidth}{!}{
  \begin{tikzpicture}

\draw[thick] (0,2) --  (2,2);
\draw[thick] (0,1) --  (2,1);
\draw[thick] (1,0) --  (0,0);

\draw[thick] (0,0) --  (0,2);
\draw[thick] (1,0) --  (1,2);
\draw[thick] (2,1) --  (2,2);

\draw[dashed] (1,0) -- (2,0) -- (2,1);

\fill[fill=gray, fill opacity=0.3] (0,0) -- (1,0)-- (1,1) -- (0,1);
\fill[fill=gray, fill opacity=0.3] (0,1) -- (2,1)-- (2,2) -- (0,2);
\fill[fill=gray, fill opacity=0.5] (0,1) -- (1,1)-- (1,2) -- (0,2);

\filldraw (0,0) circle (1.5pt) node [anchor=north] {$a$}; 
\filldraw (0,1) circle (1.5pt) node [anchor=east] {$p$}; 
\filldraw (1,0) circle (1.5pt) node [anchor=north] {$d$}; 
\filldraw (1,2) circle (1.5pt) node [anchor=south] {$b$}; 
\filldraw (2,1) circle (1.5pt) node [anchor=west] {$s$}; 
\filldraw (2,2) circle (1.5pt) node [anchor=south] {$q$}; 
\filldraw (0,2) circle (1.5pt) node [anchor=south] {$c$}; 
\filldraw (1,0.8) circle (0pt) node [anchor=west] {$e$}; 
\filldraw (2,0) circle (1.5pt) node [anchor=north] {$f$}; 
\filldraw (1,1) circle (1.5pt);
   \end{tikzpicture}}
   \caption{Case $c=r$.}\label{fig:c=r}
   \end{figure}

\noindent If either $v=c$ or $b < v \leq s$ , then $\gcd(\ini(f), \ini(g)) =1$. In the other cases, we have  $S=x_ax_bx_s - x_dx_px_q$. If $c < v \leq p$, $\ini(S) = x_ax_bx_s$.  Since $v$ does not satisfy $\pi_1$, then $[a,s]$ is an inner interval of $\MP$ and $\ini(f_{a,s}) = x_ax_s$. Therefore, 
\[
S= x_b(x_ax_s - x_px_f) -x_p(x_dx_q - x_bx_f),
\]
that is $S$ reduces to 0. If $p < v \leq b$, then $\ini(S) = x_dx_px_q$ and $\ini(f_{a,e}) = x_px_d$. Therefore, 
\[
S= x_q(x_ax_e- x_px_d) - x_a(x_ex_q-x_bx_s),
\]
that is $S$ reduces to 0. 
For the sake of brevity, we leave to readers to check, in a similar way, that if $b \in \{q,s\}$, $d \in \{p,r,s\}$, and $c=p$, then all the $S$-polynomials reduce to 0. Moreover, for no one of the corners $v$ in these cases it needs to require the hypothesis that $v$ does not satisfy the condition $\pi_1$. 

\end{proof}

We now prove the main theorem of this section.


\begin{theorem}\label{Theo: grid prime}
Let $\MP$ be a polyomino such that $I_{\MP}$ has $\MM$ as reduced Gr\"obner basis of $I_\MP$ with respect to $<^i_{\grevlex}$, with $i \in \MO$ ($i \in \ME$, respectively). If, for all $v \in  V(\MP)$, there exists a $k_v \in  \MO$ ($k_v \in \ME$, respectively) such that $v$ does not satisfy $\pi_{k_v}$, then
\begin{enumerate}
\item $\MM$ forms a reduced Gr\"obner basis with respect to $<_v^{k_v}$, for all $v \in V(\MP)$;
\item  $I_{\MP}$ is prime. 
\end{enumerate}
\end{theorem}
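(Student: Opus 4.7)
My plan is to deduce (1) directly from Proposition~\ref{prop:pik} and then leverage (1) to establish (2) via Lemma~\ref{Lemma: lattice} together with a colon-ideal argument driven by the choice of monomial order.

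Part (1) is essentially immediate: for every $v \in V(\MP)$ the hypothesis supplies an index $k_v$ for which $v$ does not satisfy $\pi_{k_v}$, and by assumption $\MM$ is already a reduced Gr\"obner basis with respect to $<^i_{\grevlex}$ for the relevant parity class $\MO$ (resp.\ $\ME$). Applying Proposition~\ref{prop:pik} pointwise at each $v$ then yields that $\MM$ is a reduced Gr\"obner basis with respect to $<_v^{k_v}$.

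For part (2), I would use the criterion recalled before Lemma~\ref{Lemma: lattice}, namely that $I_\MP$ is prime iff $I_\MP = I_\Lambda$, together with the lemma itself, which produces a monomial $u \in S$ with $I_\Lambda = (I_\MP : u)$. Since $I_\MP \subseteq I_\Lambda$ holds trivially, it suffices to prove $(I_\MP : u) \subseteq I_\MP$. Factoring $u$ into variables, I reduce this to the one-variable statement $(I_\MP : x_v) = I_\MP$ for each $v$ whose variable divides $u$, and then iterate.

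The heart of the argument is the observation that, with respect to $<_v^{k_v}$, the variable $x_v$ is the smallest. For any inner 2-minor $x_a x_b - x_c x_d \in \MM$ the four corners are distinct, so $x_v$ divides at most one of the two monomials; by the grevlex rule the monomial carrying a higher power of the smallest variable is the smaller one, so whichever of $x_ax_b$, $x_cx_d$ contains $x_v$ is \emph{not} the leading monomial. Combined with part (1), this gives a Gr\"obner basis of $I_\MP$ none of whose leading monomials is divisible by $x_v$. A standard normal-form argument then forces $x_v$ to be a nonzerodivisor on $S/I_\MP$: if $x_v f \in I_\MP$ and $\bar f$ is the normal form of $f$ modulo $\MM$, then multiplying by $x_v$ preserves being in normal form (since $x_v \nmid \ini(g)$ for any $g \in \MM$, no new monomial divisible by some $\ini(g)$ can appear), and $x_v \bar f \in I_\MP$ then forces $\bar f = 0$, whence $f \in I_\MP$.

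The main technical point I expect to check carefully is the grevlex identification of the leading monomial of each 2-minor under $<_v^{k_v}$; this is short but must be done uniformly in $v$, and in particular uses that the four corners of an inner interval are pairwise distinct so at most one of $x_ax_b$, $x_cx_d$ can carry the variable $x_v$. Once this is in hand, iterating $(I_\MP : x_v) = I_\MP$ over the variables dividing $u$ gives $(I_\MP : u) = I_\MP$, so $I_\MP = I_\Lambda$ and $I_\MP$ is prime.
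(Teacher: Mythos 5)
Your proof is correct and follows the same strategy as the paper: part (1) is the pointwise application of Proposition~\ref{prop:pik}, and part (2) combines Lemma~\ref{Lemma: lattice} with the fact that $(I_\MP : x_v) = I_\MP$ for each $v$, which rests on the observation that under $<_v^{k_v}$ the variable $x_v$ is smallest and so no leading monomial of $\MM$ involves $x_v$ (the four corners of an inner interval being distinct). The only divergence is that where the paper invokes Sturmfels' Lemma~12.1 to deduce $(I_\MP : x_v) = I_\MP$, you unpack the underlying normal-form argument directly to show $x_v$ is a nonzerodivisor on $S/I_\MP$.
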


\begin{proof}
(1) It is an immediate consequence of Proposition \ref{prop:pik}.\\
(2) Fix $v \in V(\MP)$. By (1),  let $<_v$ denote the monomial order for which $\MM$ forms a reduced Gr\"obner basis of $I_\MP$. By \cite[Lemma 12.1]{St}, the reduced Gr\"obner basis of $(I_{\MP} : x_v)$ with respect to $<_v$ is given by
\[
\{ f \in \MM \mid x_v \text{ does not divide } f\} \cup \{ f/x_v \mid  f \in \MM \text{ and } x_v \text{ divides } f\}.
\] 
Since all $f \in \MM$ are not divisible by $x_v$, the reduced Gr\"obner basis of $(I_{\MP} : x_v)$ with respect to $<_v$ is $\MM$. Therefore $(I_{\MP}: x_v) = I_{\MP}$, for all $x_v \in V(\MP)$. It follows that $(I_\MP : u) = I_\MP$ for any monomial $u \in S$. By Lemma \ref{Lemma: lattice}, we have that there exists a monomial $u \in S$ such that $I_{\Lambda} = (I_{\MP} : u)$. Then 
\[
I_{\Lambda} = (I_{\MP} : u) = I_{\MP}.
\]
It follows that $I_{\MP}$ coincides with the lattice ideal $I_{\Lambda}$, which is prime. Therefore, $I_{\MP}$ is a prime ideal, as well. 
\end{proof}

\section{Thin polyominoes}\label{sec: thin}
In this section, we introduce the class of thin polyominoes and we rephrase the geometric condition for the quadratic  Gr\"obner basis of $I_\MP$ in Proposition \ref{prop:quadraticGB} in terms of some subpolyominoes of the thin polyomino $\MP$. Thanks to the above interpretation, we find two new classes of thin polyominoes having a prime polyomino ideal: the thin cycle with no maximal inner interval of length 2 and the subgrid polyominoes.

\begin{definition}	\label{def: thin}
Let $\MP$ be a polyomino. We say that $\MP$ is \emph{thin} if $\MP$ does not have the polyomino $\QQ$ in Figure \ref{fig:square} as a subpolyomino.
\begin{figure}[H]
  \resizebox{0.15\textwidth}{!}{
  \begin{tikzpicture}
  \draw (0,0)--(2,0);
   \draw (0,1)--(2,1);
   \draw (0,2)--(2,2);
   
   \draw (0,0)--(0,2);
   \draw (1,0)--(1,2);
    \draw (2,0)--(2,2);
\end{tikzpicture}}\caption{The polyomino $\QQ$}\label{fig:square}
\end{figure} 
\end{definition}

\begin{theorem}\label{Theo: thin poly GB}
Let $\MP$ be a thin polyomino such that $\MM$ forms a reduced Gr\"obner basis of $I_\MP$ with respect to $<_{\grevlex}^{i}$ for $i \in \MO$ (for $i \in \ME$, respectively). Then, for any $v\in V(\MP)$, there exists $k\in \MO$ ($k \in \ME$, respectively) such that $\MM$ forms a reduced Gr\"obner basis of $I_\MP$ with respect to $<_v^k$.
\end{theorem}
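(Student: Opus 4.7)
The plan is to reduce the theorem to Proposition \ref{prop:pik}: for each vertex $v\in V(\MP)$ it suffices to exhibit some $k\in\MO$ (resp.\ $k\in\ME$) for which $v$ does \emph{not} satisfy the condition $\pi_k$. Thinness enters precisely because $\MP$ contains no copy of $\QQ$, so at most three of the four cells incident to $v$ belong to $\MP$; the local picture at $v$ is therefore one of a short list of configurations (a single cell, two collinear cells, two cells meeting perpendicularly at $v$, two diagonal cells, or a 3-cell L).

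Next, I would read off from Table \ref{Tab: Cond Pi} the local patterns that can possibly witness each $\pi_k$ for $k\in\MO$. Case~(II) of $\pi_k$ requires three cells of $\MP$ to meet at $v$ with the SW cell missing (for $k\in\{1,5\}$) or the NE cell missing (for $k\in\{3,7\}$). Case~(I) of $\pi_k$ requires $v$ to lie on the long side of a 3-cell L whose elbow is the NW or SE cell of its $2\times 2$ bounding box; the four admissible placements of $v$ on such an L correspond to the four values $k\in\MO$. The goal is to select $k\in\MO$ so that neither pattern is realised at $v$.

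The case analysis goes as follows. If at most two cells meet at $v$, every Case~(II) for $k\in\MO$ fails for free, and by inspecting the two possible strip orientations together with whichever diagonal cells are present one finds some $k\in\MO$ ruling out Case~(I). If three cells meet at $v$ forming an L, the missing quadrant is uniquely determined: when it is NW or SE, every Case~(II) is void for $k\in\MO$ and one chooses $k$ to kill Case~(I); when it is SW or NE, Case~(II) restricts $k$ to $\{3,7\}$ or $\{1,5\}$ respectively, and one must pick inside that pair the member whose Case~(I) configuration is not realised at $v$.

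The main obstacle I foresee is this last sub-case, where Case~(II) has already pinned $k$ down to a pair. Here I would exploit the hypothesis that $\MM$ is a reduced Gr\"obner basis with respect to some $<^i_{\grevlex}$: Proposition \ref{prop:quadraticGB} guarantees that whenever two inner intervals $[a,b]$ and $[b,e]$ of $\MP$ share a common corner, at least one of the extensions $[a,f]$ or $[a,g]$ is again inner. In a thin polyomino such an extension cannot produce a $2\times 2$ block, so it is forced into the ``free'' direction at $v$, which is precisely the direction that dismantles the remaining Case~(I) pattern. Piecing the cases together produces the desired $k\in\MO$ for every $v$; the $\ME$ version follows by the same argument after reflecting the configurations.
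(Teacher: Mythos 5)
Your plan is sound in outline and rests on the same basic insight as the paper's proof: thinness forbids a $2\times 2$ block at $v$, which restricts which conditions $\pi_k$ can hold there, and then you invoke Proposition \ref{prop:pik}. However, your execution is considerably more involved than necessary and, more importantly, the ``main obstacle'' you flag at the end does not exist, so the detour through Proposition \ref{prop:quadraticGB} is a red herring. The paper's proof is a one-line observation: $\pi_1$ (in either of its two cases) forces the two cells to the north-east and south-east of $v$ to lie in $\MP$, while $\pi_3$ (in either case) forces the cells to the north-west and south-west. If $v$ satisfied both $\pi_1$ and $\pi_3$, all four cells around $v$ would be in $\MP$, producing a copy of $\QQ$ and contradicting thinness. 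Hence $v$ fails $\pi_1$ or fails $\pi_3$, and Proposition \ref{prop:pik} finishes the $\MO$ case; the $\ME$ case is symmetric with $\pi_2$ versus $\pi_4$.

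In your own case analysis, when the SW cell is missing you correctly observe that Case (II) can only hold for $k\in\{1,5\}$, so you restrict to $k\in\{3,7\}$; but you then worry that Case (I) might block one of these. It cannot: $\pi_3$(I) requires the NW and SW cells of $v$ to be present, and $\pi_7$(I) requires the SW and SE cells, so with SW absent both of $\pi_3$(I) and $\pi_7$(I) fail automatically. The same symmetry handles the NE-missing L. The Gr\"obner-basis hypothesis from Proposition \ref{prop:quadraticGB} plays no role in this purely geometric step; it only enters through Proposition \ref{prop:pik}, where it is already built into the hypothesis of the theorem. If you strip the last paragraph of your proposal and carry the case analysis to completion, you get a correct (if longer) proof; but you should notice that all cases collapse to the single dichotomy ``$\pi_1$ fails or $\pi_3$ fails,'' which is how the paper phrases it.
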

\begin{proof}
Assume that $\MM$ forms a reduced Gr\"obner basis of $I_\MP$ with respect to $<_{\grevlex}^{i}$, with $i \in \MO$.
Let $v\in V(\MP)$. From Proposition \ref{prop:pik} it suffices to show that there exists $k \in \MO$ such that $v$ does not satisfy $\pi_k$. 

We claim that $v$ can not satisfy simultaneously $\pi_1$ and $\pi_3$. In fact, if $v$ satisfies simultaneously $\pi_1$ and $\pi_3$, then there exist four cells $C,D,E,F$ of $\MP$ such that $C\cap D\cap E \cap F=\{v\}$. From Table \ref{Tab: Cond Pi}, if $v$ satisfies $\pi_1$ then there exist two cells  $C,D$ of $\MP$ such that $v$ is simultaneously the lower left corner of $C$ and the upper left corner of $D$, while if $\PP$ satisfies $\pi_3$ then there exist two cells  $E,F$ of $\MP$ such that $v$ is simultaneously the lower right corner of $E$ and the upper right corner of $F$. Since $v$ satisfies simultaneously $\pi_1$ and $\pi_3$, the cells $C,D,E,F$ are the ones desired. This implies that the polyomino $\QQ$ in Figure \ref{fig:square} is a subpolyomino of $\MP$ and then $\PP$ is not thin, which is a contradiction. It follows that there exists at least a $k \in \MO$ such that $v$ does not satisfy $\pi_k$, as desired.
\end{proof}

\begin{corollary}\label{Cor: quadratic GB implies prime}
Let $\MP$ be a thin polyomino such that $\MM$ forms a reduced Gr\"obner basis of $I_\MP$ with respect to $<_{\grevlex}^{i}$ for $i \in \{1,\ldots,8\}$. Then $I_{\MP}$ is prime. 
\end{corollary}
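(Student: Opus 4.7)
The plan is to combine Theorem \ref{Theo: thin poly GB} with part (2) of Theorem \ref{Theo: grid prime}. By hypothesis, $\MM$ is a reduced Gr\"obner basis of $I_\MP$ with respect to $<_{\grevlex}^i$ for all $i\in\{1,\ldots,8\}$; in particular this holds for some $i\in\MO$, so the hypothesis of Theorem \ref{Theo: thin poly GB} is in force on the odd side.

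The main content then lies in the dichotomy already established in the proof of Theorem \ref{Theo: thin poly GB}: since $\MP$ is thin it cannot contain the $2\times 2$ block $\QQ$ of Figure \ref{fig:square} as a subpolyomino, and this forbids any vertex $v\in V(\MP)$ from satisfying both $\pi_1$ and $\pi_3$ at the same time (simultaneous satisfaction would produce four cells sharing $v$ as their common corner, i.e. a copy of $\QQ$ centered at $v$). Consequently, for every $v\in V(\MP)$ there exists $k_v\in\{1,3\}\subseteq\MO$ such that $v$ does not satisfy $\pi_{k_v}$.

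I would then invoke Theorem \ref{Theo: grid prime}: the Gr\"obner basis hypothesis for some $i\in\MO$ together with the property ``for every $v$ there exists $k_v\in\MO$ for which $\pi_{k_v}$ fails at $v$'' yields that $I_\MP$ is prime. No step is an obstacle here; the corollary is essentially a packaging of the previous two theorems. (The hypothesis for the even orders $i\in\ME$ is not used; one could equivalently argue via $\pi_5,\pi_7$ and $\MO$, or dually via any pair in $\ME$.)
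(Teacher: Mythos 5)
Your proposal is correct and follows essentially the same route as the paper: apply Theorem~\ref{Theo: thin poly GB} (or equivalently the $\pi_1$/$\pi_3$ dichotomy from its proof) to obtain, for each vertex $v$, an odd $k_v$ with the required Gr\"obner property, and then invoke Theorem~\ref{Theo: grid prime}(2). The only difference is cosmetic --- you unpack the internal argument of Theorem~\ref{Theo: thin poly GB} to verify the hypothesis of Theorem~\ref{Theo: grid prime} directly, whereas the paper chains the two theorems' statements --- but the underlying reasoning is identical.
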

\begin{proof}
By Theorem \ref{Theo: thin poly GB}, for any $v \in V(\MP)$, $\MM$ forms a reduced Gr\"obner basis of $I_{\MP}$ with respect to $<_v^k$, for some $k \in \{1, \dots, 8\}$. By Theorem \ref{Theo: grid prime}, it follows that $I_{\MP}$ is prime. 
\end{proof}

\begin{theorem}\label{theo: nec and suff for thin}
Let $\MP$ be a thin polyomino. The following facts are equivalent:
\begin{enumerate}
\item $\MM$ forms a reduced Gr\"obner basis of $I_\MP$ with respect to $<_{\grevlex}^{i}$ for $i \in \mathcal{O}$ ($i \in \mathcal{E}$, respectively);
\item there are no cells $C,D \not \in \MP$ and $E,F \in \MP$ such that $C \cap D \cap E \cap  F \neq \emptyset$ as in Figure \ref{fig:CD} (a) (Figure \ref{fig:CD} (b), respectively) and the polyominoes in Figure \ref{Fig: no in prime thin} (i) and (ii) (in Figure \ref{Fig: no in prime thin} (iii) and (iv), respectively) are not subpolyominoes of $\MP$.
\begin{figure}[H]
\resizebox{0.5\textwidth}{!}{
\begin{tikzpicture}
\draw (0,0)--(0,2);
\draw (0,0)--(2,0);
\draw (0,2)--(2,2);
\draw (2,0)--(2,2);
\draw(1,0)--(1,2);
\draw (0,1)--(2,1);
\filldraw[gray,fill opacity=0.5] (0,0)--(1,0)--(1,1)--(0,1);
\filldraw[gray, fill opacity=0.5] (1,1)--(2,1)--(2,2)--(1,2);
\node at (0.5,1.5){$C$};
\node at (1.5,0.5){$D$};
\node at (0.5,0.5){$E$};
\node at (1.5,1.5){$F$};
\node (d) at (1,-0.5)[below] {\large{$\mathrm{(a)}$}};

\draw (0+5,0)--(0+5,2);
\draw (0+5,0)--(2+5,0);
\draw (0+5,2)--(2+5,2);
\draw (2+5,0)--(2+5,2);
\draw(1+5,0)--(1+5,2);
\draw (0+5,1)--(2+5,1);
\filldraw[gray,fill opacity=0.5] (0+5,1)--(1+5,1)--(1+5,2)--(0+5,2);
\filldraw[gray, fill opacity=0.5] (1+5,0)--(2+5,0)--(2+5,1)--(1+5,1);
\node at (0.5+5,1.5){$E$};
\node at (1.5+5,0.5){$F$};
\node at (0.5+5,0.5){$C$};
\node at (1.5+5,1.5){$D$};
\node (d) at (1+5,-0.5)[below] {\large{$\mathrm{(b)}$}};
\end{tikzpicture}}\caption{}\label{fig:CD}
\end{figure}

\begin{figure}[H]
\resizebox{0.9\textwidth}{!}{

\begin{tikzpicture}
\draw (0,0)--(2,0);
\draw (0,1)--(3,1);
\draw (1,2)--(3,2);

\draw (0,0)--(0,1);
\draw (1,0)--(1,2);
\draw (2,0)--(2,2);
\draw (3,1)--(3,2);

\node (d) at (1.5,-0.5)[below] {\Large{$\mathrm{(i)}$}};


\draw (0+5,0)--(1+5,0);
\draw (0+5,1)--(2+5,1);
\draw (0+5,2)--(2+5,2);
\draw (1+5,3)--(2+5,3);

\draw (0+5,0)--(0+5,2);
\draw (1+5,0)--(1+5,3);
\draw (2+5,1)--(2+5,3);

\node (d) at (1+5,-0.5)[below] {\Large{$\mathrm{(ii)}$}};

\draw (1+10,0)--(3+10,0);
\draw (0+10,1)--(3+10,1);
\draw (0+10,2)--(2+10,2);

\draw (0+10,1)--(0+10,2);
\draw (1+10,0)--(1+10,2);
\draw (2+10,0)--(2+10,2);
\draw (3+10,0)--(3+10,1);

\node (d) at (1.5+10,-0.5)[below] {\Large{$\mathrm{(iii)}$}};

\draw (0+15,1)--(0+15,3);
\draw (1+15,0)--(1+15,3);
\draw (2+15,0)--(2+15,2);

\draw (1+15,0)--(2+15,0);
\draw (0+15,1)--(2+15,1);
\draw (0+15,2)--(2+15,2);
\draw (0+15,3)--(1+15,3);

\node (d) at (1+15,-0.5)[below] {\Large{$\mathrm{(iv)}$}};

\end{tikzpicture}}\caption{}\label{Fig: no in prime thin}
\end{figure}
\end{enumerate}
\end{theorem}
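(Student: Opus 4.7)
The plan is to reduce (1) to the combinatorial condition of Proposition \ref{prop:quadraticGB}: for every pair of inner intervals $[a,b]$ and $[b,e]$ of $\MP$ sharing the corner $b$, at least one of the rectangles $[a,f]$ or $[a,g]$ (with $f,g$ the anti-diagonal corners of $[b,e]$) is itself an inner interval. I will show under thinness that this condition holds if and only if (2) holds. The case $i\in\ME$ is entirely symmetric, with Figure \ref{fig:CD}(b) and Figure \ref{Fig: no in prime thin}(iii), (iv) replacing their odd-case counterparts.

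For $(1)\Rightarrow(2)$ I argue the contrapositive by exhibiting, for each forbidden configuration of (2), an explicit pair of inner intervals violating the condition of Proposition \ref{prop:quadraticGB}. A configuration of Figure \ref{fig:CD}(a) gives $[a,b]=E$, $[b,e]=F$: then $[a,f]$ and $[a,g]$ are the $2\times 1$ and $1\times 2$ intervals containing the absent $D$ and $C$, so neither is inner. If Figure \ref{Fig: no in prime thin}(i) is a subpolyomino, then taking $[a,b]$ to be its bottom horizontal $1\times 2$ strip and $[b,e]$ the upper-right single cell, $[a,f]$ lacks the needed cell in the bottom row and $[a,g]$ is a $2\times 2$ block excluded by thinness. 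Figure \ref{Fig: no in prime thin}(ii) is handled by the analogous choice with a vertical $1\times 2$ strip.

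For $(2)\Rightarrow(1)$, again by contrapositive: assume $[a,b]$ and $[b,e]$ are inner intervals with neither $[a,f]$ nor $[a,g]$ inner, and produce a forbidden configuration. By thinness both intervals are one-dimensional strips, so their union forms a staircase meeting at $b$. Let $E$ denote the last cell of $[a,b]$, $F$ the first cell of $[b,e]$, and $C$, $D$ the two remaining cells of the $2\times 2$ block around $b$. Thinness prevents all four from lying in $\MP$, so at least one of $C,D$ is absent. If both are absent, $\{E,F,C,D\}$ realize Figure \ref{fig:CD}(a). Otherwise exactly one of $C,D$ is present, which forces at least one of $[a,b]$, $[b,e]$ to have length $\geq 2$ (a pair of single cells would force both $C$ and $D$ to be missing); the cell of $[a,b]$ immediately preceding $E$ or the cell of $[b,e]$ immediately following $F$ then combines with $E$, the present cell, and $F$ to form a translate of Figure \ref{Fig: no in prime thin}(i) or (ii).

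The main obstacle is the subcase in which both strips have length $\geq 2$. Here the non-inner rectangle $[a,f]$ might fail because of a cell $(k,j_1)$ missing in the middle of the connecting row rather than adjacent to $b$. In that situation thinness is used decisively: if $k\geq i_2+2$, then the cells at columns $i_2,i_2+1$ in rows $j_1,j_2$ would all lie in $\MP$, yielding the forbidden $2\times 2$ subpolyomino $\mathcal{Q}$. Hence the first missing column is $i_2$ or $i_2+1$, and the analysis reduces to the local configuration at $b$ already handled by the dichotomy above.
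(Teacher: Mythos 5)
Your proof is correct and follows essentially the same route as the paper: both directions are reduced to the combinatorial condition of Proposition~\ref{prop:quadraticGB}, the key dichotomy at $b$ is whether both, one, or neither of the two ``anti-diagonal'' cells $C,D$ lie in $\MP$, and thinness is what forces the failure of $[a,f]$ or $[a,g]$ to be local to $b$. A few remarks on where you and the paper diverge, none of which are errors. In $(1)\Rightarrow(2)$ for configuration~(i) you take $[a,b]$ to be the bottom $1\times 2$ strip and $[b,e]$ the single top-right cell, whereas the paper takes $[a,b]$ a single cell and $[b,e]$ the top $1\times 2$ strip (Figure~\ref{fig:IJ}); both choices work, though you appear to have transposed the roles of $f$ and $g$ when describing which rectangle is the $2\times 2$, and you should say explicitly (as the paper does) that the ``missing cell'' of the other rectangle cannot in fact be present in $\MP$ because otherwise $\MP$ would again contain the $2\times 2$ block $\QQ$. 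In $(2)\Rightarrow(1)$ your observation that a pair of single-cell intervals forces both $C$ and $D$ absent is exactly the content of the paper's remark that $d$ and $g$ cannot both be corners of $D$; your claim that the present cell together with $E$, $F$, and one of $E'$ (predecessor in $[a,b]$) or $F'$ (successor in $[b,e]$) gives~(i) or~(ii) is correct but would benefit from stating which of the two auxiliary cells is used in which configuration (it is $F'$ when $D\in\MP$, and $E'$ or $F'$ when $C\in\MP$ depending on which strip has length $\geq 2$). Finally, the paragraph on the ``main obstacle'' is murky: the symbols $k,i_2,j_1,j_2$ are never introduced, and in fact this case is not really an obstacle — once one strip has length $\geq 2$, the corresponding rectangle $[a,f]$ or $[a,g]$ has both sides $\geq 2$ and is automatically not inner by thinness, so the only genuine constraint comes from the cell adjacent to $b$, which is precisely what the paper exploits by taking $\MP' = [a,b]\cup[b,e]\cup D$.
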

\begin{proof}
We prove the equivalent statements for $<^{i}_{\mathrm{grevlex}}$, with $i\in \mathcal{O}$. The case $<^{i}_{\mathrm{grevlex}}$ for $i\in \mathcal{E}$ can be done similarly. \\
(1) $\Rightarrow$ (2). 
Firstly, let $E, F$ be two cells of $\MP$ as in Figure \ref{fig:CD} (a). Since,  by hypothesis, $\MM$ is a quadratic Gr\"obner basis, by Proposition \ref{prop:quadraticGB}, at least one cell between $C$ and $D$ must be a cell of $P$. That is the situation displayed in Figure \ref{fig:CD} (a) is not possible. 
Secondly, assume, by contradiction, that the polyominoes in Figure \ref{Fig: no in prime thin} (i) and (ii) are subpolyominoes of $\MP$. Then we consider the inner intervals $[a,b]$ and $[b,e]$ of $\MP$  as in Figure \ref{fig:IJ}. \\
\begin{figure}[H]
\resizebox{0.45\textwidth}{!}{
\begin{tikzpicture}
\draw (0,0)--(2,0);
\draw (0,1)--(3,1);
\draw (1,2)--(3,2);

\draw (0,0)--(0,1);
\draw (1,0)--(1,2);
\draw (2,0)--(2,2);

\draw (3,1)--(3,2);

\node (d) at (1.5,-0.5)[below] {\Large{$\mathrm{(i)}$}};

\filldraw (0,0) circle (2pt) node [anchor=north]{$a$};
\filldraw (1,1) circle (2pt)node [anchor=south east]{$b$};
\filldraw (3,2) circle (2pt)node [anchor=west]{$e$};
\filldraw (3,1) circle (2pt)node [anchor=west]{$g$};
\filldraw (1,2) circle (2pt)node [anchor=east]{$f$};

\draw (0+5,0)--(1+5,0);
\draw (0+5,1)--(2+5,1);
\draw (0+5,2)--(2+5,2);
\draw (1+5,3)--(2+5,3);

\draw (0+5,0)--(0+5,2);
\draw (1+5,0)--(1+5,3);
\draw (2+5,1)--(2+5,3);

\node (d) at (1+5,-0.5)[below] {\Large{$\mathrm{(ii)}$}};

\filldraw (0+5,0) circle (2pt) node [anchor=north]{$a$};
\filldraw (1+5,1) circle (2pt)node [anchor=south east]{$b$};
\filldraw (1+5,3) circle (2pt)node [anchor=east]{$f$};
\filldraw (2+5,3) circle (2pt)node [anchor=west]{$e$};
\filldraw (2+5,1) circle (2pt)node [anchor=west]{$g$};

\end{tikzpicture}}\caption{}\label{fig:IJ}
\end{figure}
By Proposition \ref{prop:quadraticGB}, at least one between $[a,g]$ and $[a,f]$ is an inner interval of $\MP$, where $f$ and $g$ are the anti-diagonal corners of $[b,e]$. In both cases, we get a polyomino that is not thin, which is a contradiction.

(2) $\Rightarrow$ (1).
Assume, by contradiction, that $\MM$ does not form a quadratic Gr\"obner basis of $I_{\MP}$  with respect to $<_{\grevlex}^{i}$ for $i \in \mathcal{O}$. According to Proposition 2.1, there exist two inner intervals $[a,b]$ and $[b,e]$ of $\MP$, where $[a,b]$ has anti-diagonal corners $c$ and $d$, and $[b,e]$ has anti-diagonal corners $f$ and $g$, such that neither $[a,f]$ nor $[a,g]$ is an inner interval of $\MP$. 
Let $E$ and $F$ be respectively cells of $[a,b]$ and $[b,e]$ such that $E	\cap F =\{b\}$. Let $C$ and $D$ be respectively cells of $[a,f]$ and $[a,g]$ such that $E \cap C \cap D \cap F = \{b\}$. Since $\MP$ is thin, the cells $C$ and $D$ can not simultaneously be cells of $\MP$. If neither $C$ nor $D$ is a cell of $\PP$, then $C,D, E$, and $F$ are cells as in Figure \ref{fig:CD} (a) and this is a contradiction. Assume, without loss of generality, that $C \not \in \MP$, but $ D \in \MP$. Since $[a,g]$ is not an inner interval of $\MP$, then $d$ and $g$ are not both corners of $D$.\\
\begin{figure}[H]
\resizebox{0.5\textwidth}{!}{
\begin{tikzpicture}
\draw (0,0)--(2,0);
\draw (0,0)--(0,1);
\draw (0,1)--(2,1);

\draw (2,1)--(4,1);
\draw (2,2)--(4,2);
\draw (4,1)--(4,2);

\draw (2,0)--(2,2);

\draw (2,0)--(3,0) -- (3,1);

\node at (2.5,0.5){$D$};

\node (d) at (2,-0.5)[below] {\Large{$\mathrm{(i)}$}};

\filldraw (0,0) circle (2pt) node [anchor=north]{$a$};
\filldraw (2,1) circle (2pt)node [anchor=south east]{$b$};
\filldraw (4,2) circle (2pt)node [anchor=south]{$e$};
\filldraw (4,1) circle (2pt)node [anchor=north]{$g$};
\filldraw (2,0) circle (2pt)node [anchor=north]{$d$};
\filldraw (2,2) circle (2pt)node [anchor=south]{$f$};

\draw (6+2,0) -- (6+2,2) -- (7+2,2) -- (7+2,0) -- (6+2,0);
\draw (7+2,2) -- (7+2,4) -- (8+2,4) -- (8+2,2) -- (7+2,2);

\draw (9,1) -- (10,1) -- (10,2);
\node at (9.5,1.5){$D$};

\node (d) at (9,-0.5)[below] {\Large{$\mathrm{(ii)}$}};

\filldraw (8,0) circle (2pt) node [anchor=north]{$a$};
\filldraw (9,2) circle (2pt)node [anchor=south east]{$b$};
\filldraw (10,4) circle (2pt)node [anchor=west]{$e$};
\filldraw (10,2) circle (2pt)node [anchor=west]{$g$};
\filldraw (9,0) circle (2pt)node [anchor=north]{$d$};
\filldraw (9,4) circle (2pt)node [anchor=east]{$f$};

\end{tikzpicture}}\caption{}\label{Fig: union with D}
\end{figure}
Let $\MP'$ be the  subpolyomino of $\MP$ given by the union of the cells of $[a,b]$, $[b,e]$ and $D$, as in Figure \ref{Fig: union with D}. Then, one of the two subpolyominoes displayed in Figure \ref{Fig: no in prime thin} (i) and (ii) is a subpolyomino of $\MP'$, and then of $\MP$, which is a contradiction. 

\end{proof}

\begin{definition}\label{def:thincycle}
Let $\MP= \{C_1, \dots, C_n\}$ be a thin polyomino. If there exists a relabelling of the cells of $\PP$ such that $C_1,C_2,\ldots C_n$ is a path of cells, $C_1$ and $C_n$ have an edge in common, and $C_i \cap C_j = \emptyset$ for all $j>i+2$, then  $\MP$ is called \textit{thin cycle}.
\end{definition}

Note that a thin cycle is a polyomino with exactly one hole. In Figure \ref{Fig: thin cycles} three thin cycles are displayed. In particular, the polyominoes in (A) and (B) have the polyominoes in Figure \ref{Fig: no in prime thin} (i)--(iv) as subpolyominoes. This implies that in both cases $\MM$ is not a reduced Gr\"obner basis of $I_\MP$ with respect to $<_{\grevlex}^{i}$ for $i \in \{1,\ldots,8\}$. However, the polyomino in (A) is prime, whereas  the polyomino in (B) is not. Surprisingly, in the next result we exhibit a class of thin cycles having a prime ideal. The polyomino in Figure \ref{Fig: thin cycles} (C) belongs to such a class. 
\begin{figure}[H]
\centering
\begin{subfigure}{0.333 \textwidth}
\centering
\resizebox{0.6\textwidth}{!}{
\begin{tikzpicture}
\draw (0,0)--(0,3);
\draw (1,0)--(1,4);
\draw (2,0)--(2,1);
\draw (2,2)--(2,4);
\draw (3,0)--(3,1);
\draw (3,2)--(3,4);
\draw (4,0)--(4,4);
\draw (5,0)--(5,3);

\draw (0,0)--(5,0);
\draw (0,1)--(5,1);
\draw (0,2)--(2,2);
\draw (3,2)--(5,2);
\draw (0,3)--(5,3);
\draw (1,4)--(4,4);

\filldraw[gray, fill opacity=0.4] (0,0)--(1,0)--(1,3)--(0,3);
\filldraw[gray, fill opacity=0.4] (1,0)--(4,0)--(4,1)--(1,1);
\filldraw[gray, fill opacity=0.4] (4,0)--(5,0)--(5,3)--(4,3);
\filldraw[gray, fill opacity=0.4] (1,3)--(4,3)--(4,4)--(1,4);
\filldraw[gray, fill opacity=0.4] (1,2)--(2,2)--(2,3)--(1,3);
\filldraw[gray, fill opacity=0.4] (3,2)--(4,2)--(4,3)--(3,3);

\node (d) at (2.5,-1.5)[below] {\LARGE{$\mathrm{(A)}$}};

\end{tikzpicture}}
\end{subfigure}%
\begin{subfigure}{0.333 \textwidth}
\centering
\resizebox{0.6\textwidth}{!}{
\begin{tikzpicture}
\draw (0,0)--(0,3);
\draw (1,-1)--(1,4);
\draw (2,-1)--(2,1);
\draw (2,2)--(2,4);
\draw (3,-1)--(3,1);
\draw (3,2)--(3,4);
\draw (4,-1)--(4,4);
\draw (5,0)--(5,3);

\draw (1,-1)--(4,-1);
\draw (0,0)--(5,0);
\draw (0,1)--(2,1);
\draw (3,1)--(5,1);
\draw (0,2)--(2,2);
\draw (3,2)--(5,2);
\draw (0,3)--(2,3);
\draw (3,3)--(5,3);
\draw (1,4)--(4,4);

\filldraw[gray, fill opacity=0.4] (0,0)--(1,0)--(1,3)--(0,3);
\filldraw[gray, fill opacity=0.4] (1,-1)--(4,-1)--(4,0)--(1,0);
\filldraw[gray, fill opacity=0.4] (4,0)--(5,0)--(5,3)--(4,3);
\filldraw[gray, fill opacity=0.4] (1,3)--(4,3)--(4,4)--(1,4);
\filldraw[gray, fill opacity=0.4] (1,2)--(2,2)--(2,3)--(1,3);
\filldraw[gray, fill opacity=0.4] (3,2)--(4,2)--(4,3)--(3,3);
\filldraw[gray, fill opacity=0.4] (1,0)--(2,0)--(2,1)--(1,1);
\filldraw[gray, fill opacity=0.4] (3,0)--(4,0)--(4,1)--(3,1);

\node (d) at (2.5,-1.5)[below] {\LARGE{$\mathrm{(B)}$}};

\end{tikzpicture}}
\end{subfigure}%
\begin{subfigure}{0.333 \textwidth}
\centering
\resizebox{0.6\textwidth}{!}{
\begin{tikzpicture}
\draw (0,2)--(0,5);
\draw (1,2)--(1,5);
\draw (2,0)--(2,3);
\draw (2,4)--(2,7);
\draw (3,0)--(3,3);
\draw (3,4)--(3,7);
\draw (4,0)--(4,3);
\draw (4,4)--(4,7);
\draw (5,0)--(5,3);
\draw (5,4)--(5,7);
\draw (6,2)--(6,5);
\draw (7,2)--(7,5);

\draw (2,0)--(5,0);
\draw (2,1)--(5,1);
\draw (0,2)--(3,2);
\draw (4,2)--(7,2);
\draw (0,3)--(3,3);
\draw (4,3)--(7,3);
\draw (0,4)--(3,4);
\draw (4,4)--(7,4);
\draw (0,5)--(3,5);
\draw (4,5)--(7,5);
\draw (2,6)--(5,6);
\draw (2,7)--(5,7);

\filldraw[gray, fill opacity=0.4] (0,2)--(1,2)--(1,5)--(0,5);
\filldraw[gray, fill opacity=0.4] (6,2)--(7,2)--(7,5)--(6,5);
\filldraw[gray, fill opacity=0.4] (2,0)--(5,0)--(5,1)--(2,1);
\filldraw[gray, fill opacity=0.4] (2,6)--(5,6)--(5,7)--(2,7);
\filldraw[gray, fill opacity=0.4] (1,2)--(2,2)--(2,3)--(1,3);
\filldraw[gray, fill opacity=0.4] (1,4)--(2,4)--(2,5)--(1,5);
\filldraw[gray, fill opacity=0.4] (5,4)--(6,4)--(6,5)--(5,5);
\filldraw[gray, fill opacity=0.4] (5,2)--(6,2)--(6,3)--(5,3);
\filldraw[gray, fill opacity=0.4] (2,1)--(3,1)--(3,3)--(2,3);
\filldraw[gray, fill opacity=0.4] (4,1)--(5,1)--(5,3)--(4,3);
\filldraw[gray, fill opacity=0.4] (2,4)--(3,4)--(3,6)--(2,6);
\filldraw[gray, fill opacity=0.4] (4,4)--(5,4)--(5,6)--(4,6);

\node (d) at (3.5,-1)[below] {\Huge{$\mathrm{(C)}$}};

\end{tikzpicture}}
\end{subfigure}\caption{Examples of thin cycle polyominoes.}\label{Fig: thin cycles}
\end{figure}
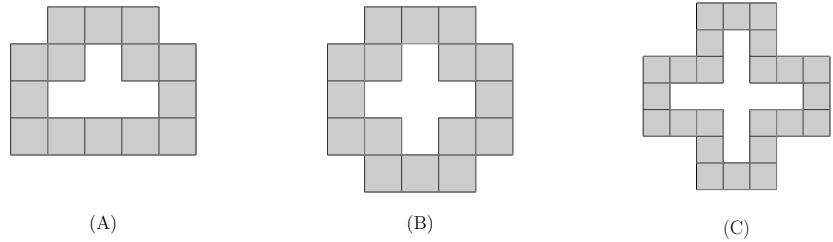

\begin{corollary}\label{cor: thin cycle prime}
Let $\MP$ be a thin cycle polyomino whose all maximal inner intervals have length at least $3$. Then $I_{\MP}$ is prime.
\end{corollary}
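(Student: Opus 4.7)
The plan is to apply Corollary \ref{Cor: quadratic GB implies prime}: it suffices to prove that $\MM$ is a reduced Gr\"obner basis of $I_\MP$ with respect to some $<_\grevlex^i$. By Theorem \ref{theo: nec and suff for thin}, this in turn reduces to showing that $\MP$ admits neither the configurations of Figure \ref{fig:CD} nor any of the staircase subpolyominoes in Figure \ref{Fig: no in prime thin}. The essential combinatorial tool I will use throughout is the following consequence of the thin cycle definition: every cell in $\MP$ has exactly two edge-neighbors in $\MP$, namely its cyclic predecessor and successor. Indeed, a putative third edge-neighbor $C_j$ of $C_i$ with cyclic distance $>2$ is excluded by $C_i\cap C_j=\varnothing$, while a direct check of the three possible positions of $C_{i+2}$ relative to the edge $C_i$--$C_{i+1}$ shows that two cells at cyclic distance $2$ can only meet at a corner, never along an edge.

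With this principle in hand, ruling out the configurations of Figure \ref{fig:CD} is essentially immediate. Suppose cells $E,F\in\MP$ sit diagonally at a common vertex $v$ with $C,D\notin\MP$. Since $E$ and $F$ share no edge they cannot be cycle-adjacent. If their cyclic distance equals $2$, the intermediate cell would have to be edge-adjacent to both $E$ and $F$, forcing it to coincide with $C$ or $D$ --- impossible. If the cyclic distance is at least $3$, the thin-cycle definition gives $E\cap F=\varnothing$, contradicting the shared vertex $v$.

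To rule out a staircase of type (i) in Figure \ref{Fig: no in prime thin}, let $D_1,D_2,D_3,D_4\in\MP$ be its four cells, with the bend located between $D_2$ and $D_3$ (which share a vertical edge). Since consecutive pairs share edges and each cell has only its two cyclic neighbors as edge-neighbors in $\MP$, these four cells must be consecutive in the cycle. Now inspect the maximal vertical inner interval $V$ containing $D_2$ and $D_3$: the cell directly above $D_3$ cannot belong to $\MP$, since otherwise it would be a third edge-neighbor of $D_3$ in addition to $D_2$ and $D_4$; the same argument excludes the cell directly below $D_2$. Hence $V=\{D_2,D_3\}$ has length $2$, contradicting the hypothesis that every maximal inner interval has length at least $3$. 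The staircases of types (ii)--(iv) are disposed of by entirely symmetric reasoning. The main delicate point is precisely this translation of ``staircase bend'' into ``maximal inner interval of length exactly $2$''; once that is in place, Theorem \ref{theo: nec and suff for thin} and Corollary \ref{Cor: quadratic GB implies prime} close the argument.
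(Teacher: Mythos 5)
Your proof is correct and follows the same route as the paper: verify condition (2) of Theorem \ref{theo: nec and suff for thin} and then invoke Corollary \ref{Cor: quadratic GB implies prime}. The paper's own proof is considerably terser (it simply asserts both checks ``by definition of thin cycle'' and ``by hypothesis''), so your explicit two-edge-neighbor principle and the translation of ``staircase bend'' into ``maximal inner interval of length two'' usefully fill in reasoning the authors leave implicit.
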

\begin{proof}
First of all, we observe that such a $\MP$ satisfies the condition (2) of Theorem \ref{theo: nec and suff for thin}. In fact, by definition of thin cycle, there are no cells $C,D,E$ and $F$ such that $E, F \in \MP$ intersect in one vertex, $C, D \not \in \MP$ and $C \cap D \cap E \cap F \not = \emptyset$, as in Figure \ref{fig:CD}. Moreover, by hypothesis, there is no maximal  inner intervals of length 2 as in Figure  \ref{Fig: no in prime thin}. By Theorem \ref{theo: nec and suff for thin}, $\MM$ is a quadratic Gr\"oebner basis for $I_{\MP}$ with respect to $<_{\grevlex}^i$, for all $i \in \{1, \dots, 8\}$. By Corollary \ref{Cor: quadratic GB implies prime}, the thesis follows. 
\end{proof}

As another application of the results obtained for thin polyominoes, we consider the grid polyominoes, that we introduced in \cite{MRR}. They are prime and, by definition, thin. One can see, by applying Proposition \ref{prop:quadraticGB}, that grid polyominoes have quadratic Gr\"obner basis with respect to $<_{\grevlex}^i$, for all $i \in \{1, \dots, 8\}$. In the following, we will define a new infinite family of prime polyominoes, obtained by the deletion of certain cells from grid polyominoes. We recall the following definition.
\begin{definition}\label{def: grid}
Let $\MP \subseteq I:=[(1,1),(m,n)]$ be a polyomino such that
\[
\MP=I \setminus \{\MH_{ij}: i \in [r], \ j \in [s] \},
\]
where $\MH_{ij}=[a_{ij},b_{ij}]$, with $a_{ij} =((a_{ij})_1,( a_{ij})_2)$, $b_{ij} =((b_{ij})_1,( b_{ij})_2)$, $1<(a_{ij})_1 < (b_{ij})_1 < m$,  $1<(a_{ij})_2 < (b_{ij})_2 < n$, and 
\begin{enumerate}
\item for any $i \in [r]$ and $\ell,k \in [s]$ we have $(a_{i\ell})_1=(a_{ik})_1$ and $(b_{i\ell})_1=(b_{ik})_1$;
\item for any $j \in [s]$ and $\ell,k \in [r]$ we have $(a_{\ell j})_2=(a_{kj})_2$ and $(b_{\ell j})_2=(b_{kj})_2$;
\item for any $i \in [r-1]$ and $\ j\in [s-1]$, we have $(a_{i+1j})_1 = (b_{ij})_1 +1 $ and $(a_{ij+1})_2 = (b_{ij})_2 +1$.
\end{enumerate}
We call $\MP$ a \emph{grid polyomino}.
\end{definition}

In Figure \ref{fig:grid}, an example of grid polyomino is displayed. For more examples, we refer readers to \cite{MRR}.

   \begin{figure}[H]
   \resizebox{!}{0.27 \textwidth}{
   \begin{tikzpicture}
   \draw(0,0)--(15,0);
   \draw(0,5)--(15,5);
   \draw(0,1)--(15,1);
   \draw(0,2)--(15,2);
   \draw(0,3)--(15,3);
   \draw(0,4)--(15,4);
   
   \draw(0,0)--(0,5);
   \draw(1,0)--(1,5);
   \draw(2,0)--(2,1);
    \draw(2,2)--(2,3);
     \draw(2,4)--(2,5);
   \draw(3,0)--(3,1);
    \draw(3,2)--(3,3);
     \draw(3,4)--(3,5);
     \draw(4,0)--(4,5);
      \draw(5,0)--(5,5);
      \draw(6,0)--(6,5);
   \draw(7,0)--(7,5);
   \draw(8,0)--(8,5);
   \draw(9,0)--(9,5);
   \draw(10,0)--(10,5);
      \draw(11,0)--(11,5);
   \draw(12,0)--(12,1);
    \draw(12,2)--(12,3);
     \draw(12,4)--(12,5);
        \draw(13,0)--(13,1);
    \draw(13,2)--(13,3);
     \draw(13,4)--(13,5);
   \draw(14,0)--(14,5);
   \draw(15,5)--(15,0);
   
   \filldraw[opacity=0.5,gray] (0,0)--(15,0)--(15,1)--(0,1);
   \filldraw[opacity=0.5,gray] (0,4)--(15,4)--(15,5)--(0,5);
    \filldraw[opacity=0.5,gray] (0,1)--(1,1)--(1,4)--(0,4);
     \filldraw[opacity=0.5,gray] (14,1)--(15,1)--(15,4)--(14,4);
     \filldraw[opacity=0.5,gray] (4,1)--(5,1)--(5,4)--(4,4);
      \filldraw[opacity=0.5,gray] (1,2)--(3,2)--(3,3)--(1,3);
           \filldraw[opacity=0.5,gray] (5,2)--(6,2)--(6,3)--(5,3);
           \filldraw[opacity=0.5,gray] (6,2)--(7,2)--(7,4)--(6,4);
             \filldraw[opacity=0.5,gray] (8,2)--(9,2)--(9,4)--(8,4);
              \filldraw[opacity=0.5,gray] (9,2)--(11,2)--(11,3)--(9,3);
           \filldraw[opacity=0.5,gray] (10,1)--(11,1)--(11,2)--(10,2);
                   \filldraw[opacity=0.5,gray] (12,2)--(14,2)--(14,3)--(12,3);

                    \filldraw[opacity=0.5,gray] (3,2)--(4,2)--(4,3)--(3,3);
                      \filldraw[opacity=0.5,gray] (6,1)--(7,1)--(7,2)--(6,2);
                     \filldraw[opacity=0.5,gray] (7,2)--(8,2)--(8,3)--(7,3);
                       \filldraw[opacity=0.5,gray] (8,1)--(9,1)--(9,2)--(8,2);
                     \filldraw[opacity=0.5,gray] (10,3)--(11,3)--(11,4)--(10,4);
                      \filldraw[opacity=0.5,gray] (11,2)--(12,2)--(12,3)--(11,3);
   \end{tikzpicture}
   }
  \caption{A grid polyomino $\MP$. }\label{fig:grid}
  
  \end{figure}

\begin{definition}\label{def: subgrid}
A grid polyomino $\MP$ is the disjoint union of two collections of cells, namely $\MP=\mathcal{P}_1\sqcup \mathcal{P}_2$,
where $\mathcal{P}_1=\{C\in\MP \mid C$ is properly contained in exactly one maximal inner interval of $\MP\}$ and
$\mathcal{P}_2=\{C\in\MP \mid C$ is properly contained in $2$ maximal inner intervals of $\MP\}$. Let $\MP_1'$ be a subset of $\MP_1$ such that $\MP'=\MP\setminus \MP_1'$ is a polyomino. We call $\MP'$ a \textit{subgrid polyomino} of $\MP$.
\end{definition}

\begin{corollary}\label{cor: gridDeletion} Let $\MP'$ be a subgrid polyomino of a grid polyomino $\MP$. Then $I_{\MP'}$ is prime.
\end{corollary}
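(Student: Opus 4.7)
The plan is to apply Corollary \ref{Cor: quadratic GB implies prime} to $\MP'$: it suffices to verify that $\MP'$ is thin and that condition (2) of Theorem \ref{theo: nec and suff for thin} holds for $\MP'$ with respect to each $i \in \{1,\ldots,8\}$.

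Thinness and condition (2)(b) are immediate. Indeed $\MP$ is thin by definition, so $\MP' \subseteq \MP$ is thin; moreover, as recalled just before Definition \ref{def: grid}, $\MP$ already has a quadratic Gr\"obner basis with respect to $<_{\grevlex}^i$ for every $i$, so by Theorem \ref{theo: nec and suff for thin} none of the polyominoes in Figure \ref{Fig: no in prime thin} is a subpolyomino of $\MP$, and hence none is a subpolyomino of $\MP'$.

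The main step is to verify condition (2)(a) for $\MP'$, i.e.\ to exclude both anti-diagonal configurations of Figure \ref{fig:CD} in $\MP'$. Since (2)(a) already holds for $\MP$, any such configuration in $\MP'$ would have to be created by the deletion of cells from $\MP_1$. I would argue vertex by vertex. At any vertex $v \in V(\MP)$ with at most two of the four surrounding cells in $\MP$, condition (2)(a) for $\MP$ forces these cells to be edge-adjacent, and any sub-collection of them in $\MP'$ is again edge-adjacent; so no diagonal pair in $\MP'$ can appear. The case in which all four cells around $v$ lie in $\MP$ does not occur, because $\MP$ is thin. The subtle case is when exactly three surrounding cells lie in $\MP$ and exactly one lies in a hole; this can happen only at a corner of a hole, since on the outer boundary of $I$ at least two of the four cells around $v$ are outside $I$.

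In this remaining case, the key observation is that the cell diagonally opposite to the missing one is always a $\MP_2$ cell: the vertex $v$ is a corner of the hole containing the missing cell, and the cell opposite to it is the $\MP_2$ cell sitting at the intersection of the horizontal and vertical strips which frame that corner of the hole, while the two other cells in $\MP$ around $v$ are $\MP_1$ cells, each belonging to one of those two strips and edge-adjacent to the $\MP_2$ cell. Since $\MP_2$ cells are never deleted in forming $\MP'$, the $\MP_2$ cell remains in $\MP'$, and hence any pair of cells of $\MP'$ at $v$ necessarily contains it together with one of its two edge-neighbours; they can never be a diagonal pair. This shows that no anti-diagonal configuration is created at $v$, so $\MP'$ satisfies condition (2) of Theorem \ref{theo: nec and suff for thin} for every $i$, and Corollary \ref{Cor: quadratic GB implies prime} yields that $I_{\MP'}$ is prime.
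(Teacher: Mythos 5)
Your argument is correct and matches the paper's approach: reduce to Corollary~\ref{Cor: quadratic GB implies prime} via Theorem~\ref{theo: nec and suff for thin}, noting that the forbidden subpolyominoes of Figure~\ref{Fig: no in prime thin} are inherited from $\MP$ and that the cell diagonally opposite the missing hole cell lies in $\MP_2$ and hence is never deleted. The paper reaches that last point more succinctly---once at least one of $C,D$ is a cell of $\MP$, that cell is properly contained in the maximal inner interval through $E$ and the one through $F$, so it is in $\MP_2$ by definition---rather than through your vertex-by-vertex count of surrounding cells and the explicit ``strip'' picture of the grid.
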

\begin{proof}
First of all, we claim that $\MP'$ satisfies the condition (2) of Theorem \ref{theo: nec and suff for thin}. By contradiction, assume that there exist $E$ and $F$ cells of $\MP'$ as in Figure \ref{fig:CD}, but neither $C$ nor $D$ is a cell of $\MP'$. By definition of grid polyomino, either $C$ or $D$ is a cell of $\MP$. Without loss of generality, we may assume that $C$ is a cell of $\MP$. Then $C \in \MP_2$, and $C \not \in \MP_1$, since $C$ is properly contained in two maximal inner intervals: one containing the cells $C$ and $E$ and the other containing $C$ and $F$. Then $C$ is still a cell of $\MP'$. Moreover, by definition of grid polyomino, the subpolyominoes displayed in Figure \ref{Fig: no in prime thin} are not subpolyominoes of $\MP$. Since $\MP' \subset \MP$, then they are not subpolyominoes of $\MP'$ either. By Theorem \ref{theo: nec and suff for thin}, $\MM$ is a quadratic Gr\"oebner basis for $I_{\MP'}$ with respect to $<_{\grevlex}^i$, for all $i \in \{1, \dots, 8\}$. By Corollary \ref{Cor: quadratic GB implies prime}, the thesis follows. 
\end{proof}

In Figure \ref{fig:gridDeletion}, it is shown a subgrid polyomino $\MP'$ obtained from the grid polyomino $\MP$ displayed in Figure \ref{fig:grid} by removing some cells in $\MP_1$. By Corollary \ref{cor: gridDeletion}, the ideal $I_{\MP'}$ is prime. 

   \begin{figure}[H]
      \resizebox{!}{0.27 \textwidth}{
   \begin{tikzpicture}
   \draw(0,0)--(1,0);
   \draw(2,0) -- (15,0);
   \draw(0,5)--(13,5);
   \draw (14,5) -- (15,5);
   \draw(0,1)-- (1,1);
   \draw (2,1) -- (15,1);
   \draw(0,2)--(3,2);
   \draw (4,2) -- (7,2);
         \draw (8,2) -- (11,2);
   \draw (12,2) -- (15,2);
   \draw(0,3)--(3,3);
   \draw (4,3) -- (7,3);
      \draw (8,3) -- (11,3);
   \draw (12,3) -- (15,3);
   \draw(0,4)-- (13,4);
   \draw (14,4) -- (15,4);
  
   \draw(0,0)--(0,5);
   \draw(1,0)-- (1,5);
   \draw(2,0)--(2,1);
   \draw (2,2) --  (2,3);
   \draw (2,4) -- (2,5);
     \draw(3,0)--(3,1);
   \draw (3,2) --  (3,3);
   \draw (3,4) -- (3,5);
     \draw(4,0)--(4,5);
      \draw(5,0)--(5,5);
      \draw(6,0)--(6,1);
         \draw(6,2)--(6,5);
      \draw(7,0)--(7,1);
         \draw(7,2)--(7,5);
               \draw(8,0)--(8,1);
         \draw(8,2)--(8,5);
   \draw(9,0)--(9,1);
      \draw(9,2)--(9,5);

      \draw(10,0)--(10,1);
   \draw(10,1)--(10,5);
   \draw(11,0)--(11,5);
 \draw(12,0)--(12,1);
   \draw (12,2) --  (12,3);
   \draw (12,4) -- (12,5);
\draw(13,0)--(13,1);
   \draw (13,2) --  (13,3);
   \draw (13,4) -- (13,5);
      \draw(14,0)--(14,5);
   \draw(15,5)--(15,0);
   
      \filldraw[opacity=0.5,gray] (0,0)--(1,0)--(1,1)--(0,1);
   \filldraw[opacity=0.5,gray] (2,0)--(15,0)--(15,1)--(2,1);
   \filldraw[opacity=0.5,gray] (0,4)--(13,4)--(13,5)--(0,5);
   \filldraw[opacity=0.5,gray] (14,4)--(15,4)--(15,5)--(14,5);
    \filldraw[opacity=0.5,gray] (0,1)--(1,1)--(1,4)--(0,4);
     \filldraw[opacity=0.5,gray] (14,1)--(15,1)--(15,4)--(14,4);
     \filldraw[opacity=0.5,gray] (4,1)--(5,1)--(5,4)--(4,4);
      \filldraw[opacity=0.5,gray] (1,2)--(3,2)--(3,3)--(1,3);
            \filldraw[opacity=0.5,gray] (10,3)--(11,3)--(11,4)--(10,4);
           \filldraw[opacity=0.5,gray] (5,2)--(6,2)--(6,3)--(5,3);
           \filldraw[opacity=0.5,gray] (6,2)--(7,2)--(7,4)--(6,4);
             \filldraw[opacity=0.5,gray] (8,2)--(9,2)--(9,4)--(8,4);
              \filldraw[opacity=0.5,gray] (9,2)--(11,2)--(11,3)--(9,3);
                \filldraw[opacity=0.5,gray] (10,1)--(11,1)--(11,2)--(10,2);
                   \filldraw[opacity=0.5,gray] (12,2)--(14,2)--(14,3)--(12,3);
   \end{tikzpicture}
}
 \caption{A subgrid polyomino of the grid polyomino in Figure \ref{fig:grid}.}\label{fig:gridDeletion}

\end{figure}
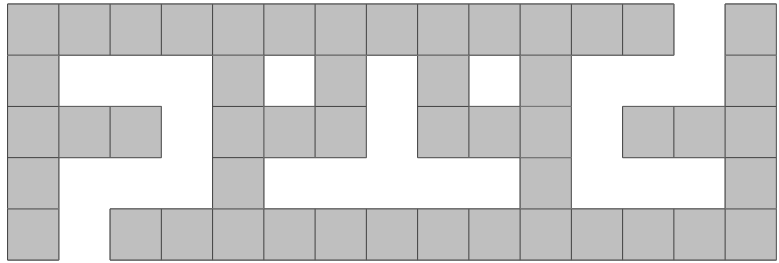

\end{document}